\newtheorem{theorem}{Theorem}
\newtheorem{corollary}{Corollary}
\newtheorem{lemma}{Lemma}
\newtheorem{remark}{Remark}
\crefname{section}{Section}{Sections}
\crefname{subsection}{Section}{Sections}
\crefname{equation}{}{}
\pgfplotsset{
    compat=1.16,
    every tick label/.append style={font=\small},
    legend style={
        font=\small,
        /tikz/every even column/.append style={column sep=0.2cm},
    },
    cycle list/Set1,
    cycle multiindex* list={
        mark list*\nextlist
        Set1\nextlist
    },
    every axis plot/.append style={semithick, mark options={scale=0.75}},
}
\newcommand{\dtau}{\,\textnormal{d}\tau}
\newcommand{\N}{\mathbb{N}}
\newcommand{\R}{\mathbb{R}}
\DeclareMathOperator{\rank}{rank}
\DeclareMathOperator{\im}{img}
\DeclareMathOperator{\Span}{span}
\begin{document}

\title{A Parareal Algorithm with Low-Rank Coarse Solvers\thanks{
Funded by the Deutsche Forschungsgemeinschaft
(DFG, German Research Foundation) under Germany's Excellence Strategy
EXC 2044 –390685587, Mathematics Münster: Dynamics–Geometry–Structure
and the Swiss National Science Foundation.
This work has been supported by the Ministry of Culture and Science NRW as part of the Lamarr Fellow Network.
}}

\author{%
    Martin J. Gander\thanks{Université de Gen\`eve, Switzerland (martin.gander@unige.ch).}
    \and Mario Ohlberger\thanks{Mathematics Münster, Germany
        (mario.ohlberge@uni-muenster.de, stephan.rave@uni-muenster.de).}
    \and Stephan Rave\footnotemark[3]}

\date{May 29, 2026}

\maketitle

\begin{abstract}
    We consider a new class of Parareal algorithms, which use ideas
    from localized reduced basis methods to construct the coarse solver
    from truncated SVD approximations of the transfer
    operators mapping initial values for a given time interval to the solution 
    at the end of the interval.  By leveraging randomized singular value
    decompositions, these low-rank approximations are obtained
    embarrassingly parallel by computing local fine solutions for
    random initial values. We show a priori and a posteriori error bounds in terms
    of the computed singular values of the transfer operators.  Our
    numerical experiments demonstrate that our approach can
    significantly outperform Parareal with single-step coarse solvers.
    At the same time, it permits to further increase parallelism in
    Parareal by trading global iterations for a larger number of independent
    local solves.
\end{abstract}

\section{Introduction}

Solving time-dependent partial differential equations (PDEs) 
efficiently is a crucial challenge in many scientific and 
engineering applications. 
Traditional time integration methods can be computationally 
expensive, particularly for complex models and long simulation times. 
The Parareal algorithm \cite{lions2001resolution} offers a compelling
approach to parallelizing the solution of time-dependent PDEs.  
By combining coarse-grained and
fine-grained parallelism, it provides significant speedups compared to
traditional serial or spatially-parallel methods, particularly for
problems with many time steps.  The core principle of
Parareal involves decomposing the time domain into a coarse temporal
grid, solving with a coarse solver on that grid, and then
iteratively refining the solution using local fine-scale computations
that can be computed in parallel without communication.  The
efficiency and accuracy of Parareal are critically dependent on the
performance of the coarse solver used to advance the solution on the
coarse grid, see, e.g., \cite{gander2007analysis,gander2008nonlinear,MR3803287,MR4876550}.

We present and analyze a new approach 
to improve the performance of Parareal by using localized-in-time reduced
basis techniques for the definition of the coarse solver.
More precisely, we consider as coarse solvers low-rank approximations of
the transfer operators mapping arbitrary initial values for a given time interval to the corresponding solution 
at the end of the interval.
Using randomized singular value decompositions (SVDs), we can efficiently construct
approximations of these transfer operators in an embarrassingly parallel manner.
Our approach is mainly targeted at linear (systems of) PDEs of parabolic type, for which the transfer operators
exhibit a rapid singular value decay, which means that small truncation ranks will be sufficient to obtain a good approximation.
This includes diffusion-advection-reaction equations with a sufficiently large diffusion term.
Other applications might include time-domain simulations of large, asymptotically stable ODE systems.
As the construction of the coarse solvers is independent of the inhomogeneous part of the PDE,
the same coarse solvers can be reused for new source terms without relevant additional computational work
(see \cref{rem:affine_g_n}).
Our method can be easily implemented on top of existing PDE solvers, as long as the solver allows
evaluating the adjoint transfer operator for the given PDE.

We refer to \cite{MR3701994,MR3408061,MR3379913} for a review on reduced basis methods 
and to \cite{Buhr2020245} concerning localized reduced basis constructions. 
Concepts based on the approximation of transfer operators using randomized 
SVDs have been studied both with respect to localization 
in space \cite{MR3824169,MR4401794} and localization in time \cite{MR4589122}. 

The Parareal algorithm typically employs a coarser time discretization 
as its coarse solver, leaving the spatial component unchanged 
from the fine-scale integrator.
In that case, the convergence of the Parareal algorithm is well
understood, showing superlinear rates with rapid convergence for parabolic
problems, and convergence difficulties for hyperbolic problems
\cite{gander2007analysis, gander2008nonlinear, gander2014analysis, MR4732743}.

Our approach diverges from this standard practice by using
coarse solvers that rely on low-dimensional, problem-adapted
approximations in space, but have the same accuracy in time as the fine solver.
A first Parareal variant using reduced basis models for the coarse solver 
was introduced in \cite{he2010reduced}. However, no practically feasible algorithm for constructing the reduced
basis was provided.
In \cite{chen2014use}, using ideas from the Krylov subspace Parareal approach 
\cite{FarhatCortialEtAl2006TimeparallelImplicitIntegrators,GanderPetcu2008AnalysisKrylovSubspace},
a global reduced basis was constructed on the fly from the linear span of the fine solver 
values obtained during the iteration of the Parareal algorithm. 
Our approach differs from \cite{chen2014use} as it computes near-best approximations of the fine
solver for arbitrary initial values instead of only considering the convergence history.
Further, in contrast to \cite{chen2014use}, our coarse solvers are constructed in an embarrassingly
parallel manner before any Parareal iterations take place.
Our construction is entirely local and does not require any communication between nodes associated with
different Parareal time intervals.

Regarding further Parareal methods with coarse solvers that use reduced approximations in space,
see \cite{MR4234221}, where reduced models obtained from asymptotic expansions for highly oscillatory 
differential equations were considered.
For problems with matrix-valued solutions that can be approximated by low-rank matrices, a Parareal method was
introduced in \cite{carrel2023low}, which uses dynamical low-rank approximations of different
ranks for the fine and coarse solvers.
We also refer to our recent discussion of the Parareal algorithm without coarse
solver, which includes a convergence analysis based on separation of variables 
and Fourier analysis \cite{GanderOhlbergerEtAl2024PararealAlgorithmCoarse}.

This paper is organized as follows:
In \cref{sec:algorithm} we define our new Parareal algorithm with low-rank coarse solvers
and discuss the algorithmic aspects of the construction of the coarse solvers via randomized SVDs.
We provide a complete convergence analysis for our algorithm in \cref{sec:algorithm},
demonstrating superlinear and linear convergence rates depending on the singular value spectra
of the transfer operators.
We also provide an easily computable a posteriori error bound.
In \cref{sec:numexp}, we finally illustrate our theoretical findings 
with a series of numerical 
experiments on one-, two-, and three-dimensional parabolic problems, 
demonstrating the practical efficiency and robustness of the 
proposed approach.

\section{Definition of the Algorithm}\label{sec:algorithm}

Consider a partition $0 = t_0 < t_1 < \ldots < t_N = T$ of the time
interval $[0, T]$ and a Hilbert space $V$.  Let $u_0 \in V$.  For each
$1 \leq n \leq N$, let an operator $F_n: V \to V$ be given.  We assume
that $F_n = F(\cdot, t_{n-1}, t_{n})$ is a fine solver for a given
linear system of differential equations with solution $u(t) \in V$, $t
\in [0,T]$. In other words, we assume that $u_n \approx u(t_n)$,
where the sequence $u_n \in V$ is given by
\[
    u_{n+1} := F_{n+1} u_{n}, \qquad 0 \leq n < N.
\]
We further assume the fine solver to be sufficiently good so that our
only goal is to accurately approximate the sequence $u_n$.  To
that end, we define coarse solvers $G_n: V \to V$, $1 \leq n \leq
N$, and consider Parareal algorithms of the form
\begin{equation}
    \label{eq:def_Parareal}
    u^{k+1}_{n+1} := F_{n+1} u^k_n + G_{n+1} u^{k+1}_n - G_{n+1} u^k_n, \qquad 0  \leq n < N, k \in \N_0,
\end{equation}
where $u^k_0 := u_0$ for all $k \in \N_0$ and
\[
    u^0_{n+1} := G_{n+1} u^0_n, \qquad 0  \leq n < N.
\]
Our goal is to find suitable coarse solvers $G_n$ such that $u^{k}_n$
quickly converges to $u_n$ as the number of Parareal iterations $k$
increases.  At the same time, each $G_n$ should be fast to evaluate,
since for each Parareal iteration \cref{eq:def_Parareal}, only the
fine solvers $F_n$ can be evaluated in parallel, whereas the $G_n$
have to be evaluated sequentially.  A typical choice for $G_n$ would
be a time stepping scheme with a coarse time-step size.

In this work, we consider $G_n$ which are obtained from a low-rank
approximation of the corresponding fine solver $F_n$.
To do so, we will always assume that each $F_n$ is a continuous,
affine linear operator.  In particular, we can write each $F_n$ as
\begin{equation}
    \label{eq:F_n_affine}
    F_n v = F_n'v + b_n, \qquad b_n := F_n 0,
\end{equation}
where $F_n': V \to V$ is a continuous linear operator.  Finally, we
assume that each $F_n'$ is a compact operator, which is always the
case in a discrete setting with finite-dimensional $V$.  In
particular, this means that $F_n'$ has a singular value decomposition
of the form
\begin{equation}
    \label{eq:svd}
    F_n' v = \sum_{r=1}^{\rank F_n'} \psi_{n,r} \cdot \sigma_{n,r} \cdot (\varphi_{n,r}, v)_V .
\end{equation}
Here, $(\cdot, \cdot)_V$ denotes the inner product on $V$,
$\sigma_{n,1} > \sigma_{n,2} > \ldots > 0$ are the uniquely defined
singular values of $F_n'$, $\varphi_{n,r} \in V$, $\|\varphi_{n,r}\| =
1$ are the corresponding right- and $\psi_{n,r} \in V$, $\|\psi_{n,r}\| =
1$ the corresponding left-singular vectors.  We note that when $V$ is
finite dimensional, \cref{eq:svd} is the matrix-free formulation of
the generalized SVD of $F_n'$ that takes the inner product on $V$ into
account.

We now define the coarse solver $G_n$ by considering a truncated
singular value decomposition of $F_n'$ and adding the same affine part
$b_n$:
\begin{equation}
    \label{eq:spectral_G}
    G_n v := \sum_{r=1}^{R_n} \psi_{n,r} \cdot \sigma_{n,r} \cdot (\phi_{n,r}, v) + b_{n},
\end{equation}
where $0 \leq R_n \leq \rank F_n'$ is the freely choosable truncation
rank.

Note that
\[
    (F_n - G_n) v = \sum_{r=R_n+1}^{\rank F_n'} \sigma_{n,r} \cdot (\phi_{n,r}, v) \cdot \psi_{n,r}
\]
is linear with
\begin{equation}
    \label{eq:truncation_error}
    \|F_n - G_n\| = \sigma_{n, R_n+1}.
\end{equation}
In particular, $G_n$ can be made an arbitrarily good approximation of
$F_n$ by increasing $R_n$.  We are mostly interested in the case of
linear parabolic PDEs, where the analytical solution operators for the
initial-value problems on each time interval can be expected to have rapidly
decaying singular values.
We expect $F_n'$ to be a sufficiently good approximation of the corresponding analytical
solution operator in the sense that it inherits its singular value decay
(see \cref{sec:effect_time_disc} for further discussion),
so small values of $R_n$ will be sufficient.  Indeed, in
\cite{GanderOhlbergerEtAl2024PararealAlgorithmCoarse} we have shown
that for the heat equation with homogeneous Dirichlet conditions in
one spatial dimension, the Parareal iteration \cref{eq:def_Parareal}
even converges for $G_n = 0$.  We have further shown the exponential decay
of the singular values of $F_n'$ when $F_n$ is chosen to be the exact
solution of the heat equation (see also experiment~1 below).  The
argument can be easily extended to higher spatial dimensions.  In the
context of model order reduction, the maps $F_n'$ are called transfer
operators in time.  In \cite{MR4589122} the compactness of $F_n'$ was
shown for a large class of parabolic PDEs.

\begin{remark}
    While assuming that the fine solvers $F_n$ are affine restricts
    the class of possible time integrators, we note
    that this condition is satisfied by all Runge-Kutta schemes,
    as long as the underlying system of differential equations is linear.
    Further, even when the time discretization leads to non-affine
    $F_n$, these $F_n$ still approximate the affine exact solution operator of the underlying
    system of differential equations, and we can interpret the
    randomized SVD algorithm discussed below as a numerical
    approximation of the truncated SVD of the exact solution operator.
    Thus, the assumption that the $F_n$ are affine is mainly a simplification for the theoretical
    analysis. In particular, the error bounds in
    \cref{sec:convergence_analysis} could be extended to also take
    the error between the fine solver and the exact solution
    operator into account.
\end{remark}

\begin{remark}
    \label{rem:affine_g_n}
    Note that the Parareal iteration \cref{eq:def_Parareal} can also
    be written as $u_{n+1}^{k+1} = F_{n+1}u_n^k + G_{n+1}'u_n^{k+1} -
    G_{n+1}'u_n^k$, where $G_n'$ is the linear part of $G_n$.  Thus,
    adding $b_n$ in the definition of $G_n$ only has an effect in
    the initialization phase where the $u_n^0$ are computed.  We
    choose to include $b_n$ in the definition of $G_n$ as this will
    give us a much better initial approximation of the solution
    without any additional sequential computations. 
\end{remark}

\subsection{Computation of $G_n$}

Even when $V$ is finite dimensional, computing a truncated SVD of
$F_n'$ using direct matrix-based algorithms is not an option:
determining the matrix of $F_n'$ would mean to locally solve the PDE
for every degree of freedom of the solution space.  Instead, we
consider the following basic randomized SVD algorithm to compute
$G_n$:
\begin{enumerate}
    \item Draw $R_n + p$ random vectors $\omega_i \in V$ according to
      some appropriate probability distribution on $V$, and compute
      $F_n' \omega_i$.  

      If we draw enough random vectors, $W:=
      \operatorname{span} \{F_n'\omega_1, \ldots,
      F_n'\omega_{R_n+p}\}$ will be a good approximation of the image
      of $F_n'$ with high probability.  In particular, we can
      approximate the SVD of $F_n'$ by computing the SVD of $P_WF_n'$,
      where $P_W$ is the $V$-orthogonal projection onto~$W$.
    \item Compute an orthonormal basis $w_1, \ldots, w_{R_n+p}$ of
      $W$, e.g., by using the modified Gram-Schmidt process.
    \item Compute $F_n'^*w_i$, $1 \leq i \leq R_n+p$, where $F_n'^*: V
      \to V$ is the adjoint of $F_n'$ given by $(F_n'^*w, v)_V = (w,
      F_n'v)_V$ for all $v,w \in V$.
    \item Compute an orthonormal basis $v_1, \ldots, v_{R_n+p}$ for
        $\operatorname{span} \{F_n'^*w_1, \ldots, F_n'^*w_{R_n+p}\}$.

        Then we have:
        \begin{align*}
            P_WF_n' v &= \sum_{i=1}^{R_n+p} w_i \cdot (w_i, F_n'v)_V
                      = \sum_{i,j=1}^{R_n+p} w_i \cdot (F_n'^*w_i, v_j)_V \cdot (v_j, v)_V.
        \end{align*}
    \item Compute a singular value decomposition of the
      small-dimensional matrix $M \in R^{(R_n+p) \times (R_n+p)}$,
      $M_{i,j} := (F_n'^*w_i, v_j)_V$.
      
      Since the orthonormal bases $w_i$ and
      $v_j$ span $\im F_n'$ and $(\ker
      F_n')^\perp$, it is easy to show that the singular values of $M$
      agree with the singular values of $P_WF_n'$.  Further, if
      $\underline{\psi}_r,\underline{\varphi}_r \in \R^{R_n+p}$ are
      the $r$-th left and right-singular vectors of $M$, then
      \[
        \psi_r := \sum_{i=1}^{R_n+p} \underline{\psi}_{r,i} \cdot v_i, \qquad
        \varphi_r := \sum_{i=1}^{R_n+p} \underline{\varphi}_{r,i} \cdot w_i
      \]
      are the $r$-th left and right-singular vectors of $P_WF_n'$.
    \item Return the first $R_n$ singular values and vectors $\sigma_r$,
      $\varphi_r$, $\psi_r$, $1 \leq r \leq R_n$.
\end{enumerate}\bigskip

The $p$ additionally drawn random vectors are called oversampling
vectors, and $p$ has to be chosen large enough to ensure that the
first $R_n$ singular values and vectors of $P_WF_n'$ are close to the
first $R_n$ singular values and vectors of $F_n'$.  In the case of $V =
\R^m$, extensive analysis is available that permits controlling the
approximation error with arbitrarily high probability \cite{MR2806637}.  In
particular, adaptive algorithms are available that choose $p$
a~posteriori, based on a prescribed (typically small) failure
probability.  An analysis for general finite-dimensional Hilbert
spaces can be found in~\cite{MR3824169}.

Randomized SVD algorithms perform well when the
singular values decay rapidly.  For operators with slow singular value
decay, algorithms with power iterations are considered where $W :=
\Span \{(F_n'F_n'^*)^qF_n'\omega_i \,|\, 1 \leq i \leq R_n+p\}$ with $q
\geq 1$.  For parabolic problems, the singular value decay is
generally fast enough such that power iterations are not necessary.
In fact, for the numerical examples considered in \cref{sec:numexp},
already a single oversampling vector ($p = 1$) turns out to be
sufficient.

\subsection{Computational costs}
Compared to classical coarse solvers, using low-rank coarse solvers
involves an additional setup phase where the truncated singular value
decompositions of the $F_n'$ as well as the $b_n$ have to be computed.  To
compute a $G_n'$ of rank $R_n$, $R_n$ evaluations of $F_n$
are required, followed by $R_n$ evaluations of $F_n'^*$.
The costs for computing the randomized SVD from these evaluations
are negligible compared to evaluating the fine solvers.
Additionally, a single evaluation of $F_n$ is required to obtain $b_n$.
Overall, after $k$ Parareal iterations, which each require a further fine
solver evaluation,
\begin{equation}\label{eq:num_F_eval}
    K_{F,n} := 2(R_n + p) + k + 1
\end{equation}
fine solver evaluations per time interval are required, compared to only $k$
evaluations for Parareal with classical coarse solvers.
However, since our coarse solvers lead to faster converging methods,
a smaller number $k$ of sequential Parareal iterations is required, whereas all
$2(R_n + p) + 1$ fine solver evaluations for computing $G_n$ can be
carried out in parallel for all time intervals
$[t_{n-1}, t_{n}]$ without communication.  Furthermore, the individual evaluations of
$F_n$, $F_n'$ and $F_n'^*$ can be carried out in parallel as well.
Hence, choosing $R_n$ allows us to freely balance the
amount of parallel work with the number of required communication events.
Moreover, since the same initial value problem is solved locally for $R_n + p$
initial values at the same time, our approach can benefit immensely from using direct linear solvers
or SIMD vectorization (see discussion in \cref{sec:ex4}).
Parallelism can be further increased at the expense of numerical accuracy
by evaluating $F_n'^*$ on an additional set of random vectors
instead of evaluating it on $w_1, \ldots, w_{R_n+p}$ (e.g., \cite[section 15]{MartinssonTropp2020RandomizedNumericalLinear}).
Finally, we will see in \cref{sec:ex4} that, in some cases, the convergence of our method
can be so much faster that $K_{F,n}$ can actually be smaller than the number
of Parareal iterations required using classical coarse solvers.

After the setup phase, an evaluation of
$G_n$ only requires the computation of $R_n$ inner products and the
linear combination of $R_n + 1$ vectors.
Thus, evaluating $G_n$ is typically much cheaper than using a
single backward Euler step as coarse solver.

\section{Convergence analysis}\label{sec:convergence_analysis}

We now show a priori and a posteriori error bounds for our algorithm.
For the a priori estimates, we follow the analysis approach in
\cite{gander2008nonlinear} under the assumption that we are solving a
linear system of differential
equations, see also \cite{MR4643841} and the recent book
\cite{GanderLunet2024} for more details.
Similar a priori error bounds for time-independent, jointly diagonizable
$F' = F_n'$, $G' = G_n'$ were obtained in \cite{DobrevKolevEtAl2017TwoLevelConvergenceTheory}
for the MGRIT method.
Our analysis is based on the following abstract error estimate
which holds for arbitrary Parareal algorithms with affine $F_n$ and $G_n$.
As before, we denote the linear parts of $F_n$ and $G_n$
by $F_n'$ and $G_n'$.

\begin{lemma}\label{Lemma1}
    \label{thm:a_priori_lemma}
    Assume that $\|F'_n - G'_n\| \leq \varepsilon$ and $\|G'_n\| \leq
    \delta$ for all $1 \leq n \leq N$.  Let $e_n^k := u_n^k - u_n$
    denote the Parareal error at time index $n$ and iteration $k$.
    Then we have
    \begin{equation}
        \label{eq:base_error_inequality}
        \|e_{n}^{k}\| \leq \varepsilon \|e_{n-1}^{k-1}\| + \delta \|e_{n-1}^{k}\|
    \end{equation}
    for all $n, k \geq 1$.
    Further, we have
    \begin{equation}
        \label{eq:base_error_inequality2}
        \|e_n^{k}\| \leq \varepsilon \sum_{m=1}^{n-1}\delta^{n-m-1}\|e_{m}^{k-1}\|.
    \end{equation}
\end{lemma}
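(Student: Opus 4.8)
The plan is to first establish a one-step recursion for the errors $e_n^k$ directly from the defining iteration, and then to unroll this recursion in the time index $n$ by induction.

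First I would subtract the exact recursion $u_{n+1} = F_{n+1} u_n$ from the Parareal update \cref{eq:def_Parareal}. Writing $F_{n+1} v = F_{n+1}' v + b_{n+1}$ as in \cref{eq:F_n_affine} and splitting $G_{n+1}$ analogously into its linear part $G_{n+1}'$ and a constant, the crucial observation is that all constant terms cancel: the affine part $b_{n+1}$ enters both $u_{n+1}^{k+1}$ and $u_{n+1}$ and so drops out of the difference, while the constant part of $G_{n+1}$ cancels within the combination $G_{n+1} u_n^{k+1} - G_{n+1} u_n^k$ of the iteration itself. What remains is the purely linear error recursion
\[
    e_{n+1}^{k+1} = (F_{n+1}' - G_{n+1}')\, e_n^k + G_{n+1}'\, e_n^{k+1}.
\]
Applying the triangle inequality together with the assumed bounds $\|F_n' - G_n'\| \leq \varepsilon$ and $\|G_n'\| \leq \delta$, and then shifting indices $n+1 \mapsto n$, $k+1 \mapsto k$, yields \cref{eq:base_error_inequality} at once.

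For the second estimate I would proceed by induction on $n$ using \cref{eq:base_error_inequality}. The key boundary fact is that $e_0^k = u_0^k - u_0 = 0$ for every $k$, since $u_0^k := u_0$ by definition. This makes the base case transparent: for $n = 1$ the recursion gives $\|e_1^k\| \leq \varepsilon \|e_0^{k-1}\| + \delta \|e_0^k\| = 0$, matching the empty sum on the right-hand side of \cref{eq:base_error_inequality2}. For the inductive step I would assume the claimed bound at index $n-1$, insert it into the $\delta \|e_{n-1}^k\|$ term of \cref{eq:base_error_inequality}, and note that multiplying through by $\delta$ raises each power $\delta^{n-1-m-1}$ to $\delta^{n-m-1}$. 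The leftover term $\varepsilon \|e_{n-1}^{k-1}\|$ is then precisely the missing $m = n-1$ summand, for which $\delta^{n-m-1} = \delta^0 = 1$, so the two contributions combine into the full sum $\varepsilon \sum_{m=1}^{n-1}\delta^{n-m-1}\|e_m^{k-1}\|$.

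There is no serious obstacle here; the argument is elementary once the linear recursion is in hand. The only points demanding care are the index bookkeeping in the inductive step — in particular checking that the isolated $\varepsilon\|e_{n-1}^{k-1}\|$ term is exactly the top-index contribution to the telescoped sum — and ensuring the empty-sum convention correctly encodes the vanishing of $e_1^k$ for $k \geq 1$.
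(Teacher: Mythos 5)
Your proposal is correct and takes essentially the same route as the paper's own proof: you derive the identical linear error identity $e_n^k = (F_n'-G_n')e_{n-1}^{k-1} + G_n' e_{n-1}^{k}$ (with the affine parts of $F_n$ and $G_n$ cancelling), apply the triangle inequality for \cref{eq:base_error_inequality}, and then prove \cref{eq:base_error_inequality2} by the same induction on $n$, using $e_0^k = 0$ for the base case and absorbing the isolated $\varepsilon\|e_{n-1}^{k-1}\|$ term as the $m = n-1$ summand. There is no gap; your index bookkeeping matches the paper's argument exactly.
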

\begin{proof}
    We have
    \begin{equation}
        \label{eq:base_error_identity}
        \begin{aligned}
            e_{n}^{k}
                &= (F_{n}u_{n-1}^{k-1} + G_{n}u_{n-1}^{k} - G_{n}u_{n-1}^{k-1}) - F_{n}u_{n-1} \\
                &= F_{n}'e_{n-1}^{k-1} + G_{n}'e_{n-1}^{k} - G_{n}'e_{n-1}^{k-1} \\
                &= (F_{n}'-G_{n}')e_{n-1}^{k-1} + G_{n}'e_{n-1}^{k}.
        \end{aligned}
    \end{equation}
    Hence, \cref{eq:base_error_inequality} follows from the triangle
    inequality and the bounds assumed on the norms of $F_{n}' -
    G_{n}'$ and $G_{n}'$.  We show \cref{eq:base_error_inequality2} by
    induction on $n$.  For $n=1$, \cref{eq:base_error_inequality}
    yields $\|e_1^k\| = 0$, since $\|e_0^{k-1}\| = \|e_0^k\| = 0$.  For
    the induction step, we use \cref{eq:base_error_inequality} again
    to obtain
    \begin{align*}
        \| e_{n}^{k} \| &\leq \varepsilon \|e_{n-1}^{k-1} \| + \delta \|e_{n-1}^{k} \|\\
            &\leq \varepsilon \|e_{n-1}^{k-1}\| + \delta \sum_{m=1}^{n-1-1} \delta^{n-1-m-1} \|e_m^{k-1}\|
            = \varepsilon \sum_{m=1}^{n-1} \delta^{n-m-1} \|e_m^{k-1}\|.
    \end{align*}
\end{proof}

Using the inequalities from Lemma \ref{Lemma1}, we obtain the
following superlinear and linear convergence results:
\begin{theorem}
    \label{thm:general_convergence}
    Let
    \begin{equation}
        \label{eq:bounds_delta_eps}
        \delta := \max_{1 \leq n \leq N} \sigma_{n,1}, \qquad
        \varepsilon := \max_{1 \leq n \leq N} \sigma_{n, R_n+1}.
    \end{equation}
    Then for any $1 \leq n \leq N$ we have
    \begin{equation}
        \label{eq:error_bound_initial}
        \|e_n^0\| \leq
            \sum_{m=0}^{n-1} \min\Bigl(2\delta, (n-m)\varepsilon\Bigr) \delta^{n-m-1}\|b_m\|,
    \end{equation}
    where $b_0 := u_0$.
    Furthermore, for any $1 \leq n \leq N$ and $k \in \N$ we have
    \begin{align}
        \label{eq:error_bound_eps_delta}
        \|e_n^k\| &\leq \hphantom{2}\varepsilon^k
                        \sum_{m=1}^{\hspace{0.5em}n-k\hspace{0.5em}} \binom{n-m}{k-1} \delta^{n-m-k} \|e_m^0\|\\
        \label{eq:error_bound_eps_delta2}
                  &\leq 2\varepsilon^k \sum_{m=0}^{n-k-1} \binom{n-m}{k} \delta^{n-m-k} \|b_m\|.
    \end{align}
    If $\delta \leq 1$, we have for $k \in \N$ that
    \begin{equation}
        \label{eq:error_bound_eps}
        \max_{1 \leq n \leq N}\|e_n^k\|
            \leq \binom{N}{k} \varepsilon^k \max_{1 \leq m \leq N-k} \|e_m^0\|.
    \end{equation}
    If $\delta < 1$, we have for $k \in \N$ that
    \begin{equation}
        \label{eq:error_bound_eps_inf_time}
        \max_{1 \leq n \leq N}\|e_n^k\|
        \leq \left(\frac{\varepsilon}{1 - \delta}\right)^k\max_{1 \leq n \leq N-k} \|e_n^0\|.
    \end{equation}
\end{theorem}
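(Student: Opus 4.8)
The plan is to reduce every estimate to \cref{Lemma1}, whose hypotheses hold for the stated $\delta,\varepsilon$ since $\|G_n'\| = \sigma_{n,1} \leq \delta$ and, by \eqref{eq:truncation_error}, $\|F_n'-G_n'\| = \sigma_{n,R_n+1} \leq \varepsilon$. The four displayed bounds are then obtained in sequence, each feeding into the next.

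First I would establish the initial bound \eqref{eq:error_bound_initial}. Unrolling the recursions gives $u_n = \sum_{m=0}^{n}(F_n'\cdots F_{m+1}')b_m$ and $u_n^0 = \sum_{m=0}^{n}(G_n'\cdots G_{m+1}')b_m$ (with empty products read as the identity and $b_0 := u_0$), so the $m=n$ terms cancel and
\[
    e_n^0 = -\sum_{m=0}^{n-1}\bigl[(F_n'\cdots F_{m+1}') - (G_n'\cdots G_{m+1}')\bigr]b_m.
\]
The crux is to bound each propagator difference $A_{n,m} := (F_n'\cdots F_{m+1}') - (G_n'\cdots G_{m+1}')$ in two complementary ways. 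The telescoping identity
\[
    A_{n,m} = \sum_{j=m+1}^{n} (G_n'\cdots G_{j+1}')\,(F_j'-G_j')\,(F_{j-1}'\cdots F_{m+1}')
\]
together with the triangle inequality and $\|F_j'-G_j'\|\leq\varepsilon$, $\|F_i'\|,\|G_i'\|\leq\delta$ yields $\|A_{n,m}\|\leq (n-m)\varepsilon\,\delta^{n-m-1}$; estimating $A_{n,m}$ instead as a difference of two products of norm $\leq\delta^{n-m}$ yields $\|A_{n,m}\|\leq 2\delta^{n-m}$. Taking the termwise minimum of these two coefficients gives exactly \eqref{eq:error_bound_initial}.

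Next I would prove \eqref{eq:error_bound_eps_delta} by induction on $k$ starting from \eqref{eq:base_error_inequality2}; the base case $k=1$ is \eqref{eq:base_error_inequality2} itself since $\binom{n-m}{0}=1$. In the induction step I substitute the hypothesis for each $\|e_l^{k-1}\|$, interchange the summations over $l$ and $m$, and evaluate the inner sum $\sum_l \binom{l-m}{k-2}$ with the hockey-stick identity $\sum_{i=k-2}^{M}\binom{i}{k-2}=\binom{M+1}{k-1}$; this produces the coefficient $\binom{n-m}{k-1}$ up to a harmless $-1$ that only strengthens the bound. Feeding the $2\delta$-branch of \eqref{eq:error_bound_initial} (i.e.\ $\|e_m^0\| \leq 2\sum_{l=0}^{m-1}\delta^{m-l}\|b_l\|$) into this estimate and applying the same identity once more gives \eqref{eq:error_bound_eps_delta2}.

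For the uniform estimates I would treat the two cases separately. If $\delta\leq1$, I bound $\delta^{n-m-k}\leq1$ in \eqref{eq:error_bound_eps_delta} and sum the binomials via $\sum_{m=1}^{n-k}\binom{n-m}{k-1}=\binom{n}{k}-1\leq\binom{N}{k}$, noting that the $m$-range lies in $1\leq m\leq N-k$; the maximum over $n$ then gives \eqref{eq:error_bound_eps}. If $\delta<1$, I avoid the binomials and iterate \eqref{eq:base_error_inequality2} directly: bounding the geometric sum by $\sum_{j\geq0}\delta^j=1/(1-\delta)$ shows, for every $L\leq N$, that $\max_{1\leq n\leq L}\|e_n^k\| \leq \tfrac{\varepsilon}{1-\delta}\max_{1\leq m\leq L-1}\|e_m^{k-1}\|$, and applying this $k$ times (the index range shrinking by one at each step) yields \eqref{eq:error_bound_eps_inf_time}. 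The hardest part is \eqref{eq:error_bound_initial}: one must spot the telescoping structure of $A_{n,m}$ so that the two competing estimates can be merged into a single termwise minimum, whereas the remaining binomial manipulations are routine once the hockey-stick identity and the correct summation ranges are in place.
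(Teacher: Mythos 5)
Your proposal is correct and takes essentially the same route as the paper's proof: it verifies the hypotheses of \cref{Lemma1} via $\|G_n'\|=\sigma_{n,1}$ and \cref{eq:truncation_error}, proves \cref{eq:error_bound_initial} by bounding each propagator difference in the two complementary ways (your explicit telescoping identity for $A_{n,m}$ is just the unrolled form of the paper's induction on $n$ and gives the identical $(n-m)\varepsilon\delta^{n-m-1}$ versus $2\delta^{n-m}$ estimates), and then obtains \cref{eq:error_bound_eps_delta,eq:error_bound_eps_delta2,eq:error_bound_eps,eq:error_bound_eps_inf_time} by the same induction on $k$, hockey-stick summation, and geometric-series arguments used in the paper. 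All index manipulations check out, so there is no gap.
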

\begin{proof}
    We have $\|G_n'\| = \|F_n'\| = \sigma_{n,1}$ and \cref{eq:truncation_error}.
    In particular, the assumptions of \cref{thm:a_priori_lemma} are satisfied.

    To show \cref{eq:error_bound_initial}, note that
    \begin{align*}
        u_{n}^0 - u_{n} 
            &= \left(\sum_{m=0}^{n-1} G_{n}'\cdots G_{m+1}'b_m + b_{n}\right)
              - \left(\sum_{m=0}^{n-1} F_{n}'\cdots F_{m+1}'b_m + b_{n}\right) \\
            &= \sum_{m=0}^{n-1} G_n'\cdots G_{m+1}'b_m - F_n'\cdots F_{m+1}'b_m.
    \end{align*}
    Each of the terms in the sum can be bounded by
    $2\delta^{n-m}\|b_m\|$ using the triangle inequality and
    that $\|G_n'\|,\|F_n'\| \leq \delta$.  Alternatively, we get the bound
    $(n-m)\varepsilon\delta^{n-m-1}\|b_m\|$ by induction on~$n$,
    \begin{align*}
        &\|G_n'\cdots G_{m+1}'b_m - F_n' \cdots F_{m+1}'b_m\| \\
        &\hspace{10em}\leq \|(G_n'-F_n')G_{n-1}'\cdots G_{m+1}'b_m\| \\
        &\hspace{10em}\qquad\qquad + \|F_n'(G_{n-1}'\cdots G_{m+1}' - F_{n-1}'\cdots F_{m+1}')b_m\|\\
        &\hspace{10em}\leq \varepsilon\delta^{n-m-1}\|b_m\| 
            + \delta\cdot (n-1-m)\varepsilon\delta^{n-1-m-1}\|b_m\| \\
        &\hspace{10em}= (n-m)\varepsilon\delta^{n-m-1}\|b_m\|.
    \end{align*}
    Next, we show \eqref{eq:error_bound_eps_delta} by induction on
    $k$.  For $k=1$, \cref{eq:error_bound_eps_delta} is precisely
    \cref{eq:base_error_inequality2}.  For the induction step, we use
    \cref{eq:base_error_inequality2} again to obtain
    \begin{align*}
        \|e_n^{k+1}\|
        &\leq \varepsilon \sum_{m=1}^{n-1}\delta^{n-m-1}\|e_{m}^k\|\\
        &\leq \varepsilon \sum_{m=1}^{n-1}\delta^{n-m-1}
            \varepsilon^k \sum_{l=1}^{m-k} \binom{m-l}{k-1} \delta^{m-l-k} \|e_l^0\| \\
        &=\varepsilon^{k+1}\sum_{l=1}^{n-1-k}\underbrace{\sum_{m=l+k}^{n-1}\binom{m-l}{k-1}}_{\leq\binom{n-l}{k}}\delta^{n-l-k-1}\|e_l^0\|.
    \end{align*}
    The bound on the sum of binomial coefficients follows from the hockey-stick identity.
    The bound \cref{eq:error_bound_eps_delta2} follows by combining
    \cref{eq:error_bound_eps_delta} with
    \cref{eq:error_bound_initial},
    \begin{align*}
        \|e_n^k \|
            & \leq \varepsilon^k \sum_{m=1}^{n-k}
                    \binom{n-m}{k-1} \delta^{n-m-k}
                    \sum_{l=0}^{m-1} 2\delta^{m-l}\|b_l\|\\
            & \leq 2 \varepsilon^k \sum_{l=0}^{n-k-1}
                \underbrace{\sum_{m=l+1}^{n-k}\binom{n-m}{k-1}}_{\leq \binom{n-l}{k}} \delta^{n-k-l} \|b_l\|.
    \end{align*}
    If $\delta \leq 1$, the bound \eqref{eq:error_bound_eps} follows
    directly from \cref{eq:error_bound_eps_delta},
    \begin{align*}
        \|e_n^k\| \leq \varepsilon^k\max_{1 \leq m \leq n-k} \|e_m^0\| \sum_{m=1}^{n-k} \binom{n-m}{k-1}
        \leq\varepsilon^k\max_{1 \leq m \leq n-k} \|e_m^0\|\cdot\binom{n}{k},
    \end{align*}
    for each $1 \leq n \leq N$.  Thus, taking the maximum w.r.t.\ $n$
    and using $\binom{n}{k} \leq \binom{N}{k}$ yields the claim.

    Finally, to show \cref{eq:error_bound_eps_inf_time}, note that for
    $\delta < 1$, \cref{eq:base_error_inequality2} can be further
    bounded by
    \[
        \|e_n^{k}\| \leq \varepsilon \sum_{m=1}^{n-1}\delta^{n-m-1}\|e_{m}^{k-1}\| 
        <  \varepsilon \max_{0 \leq m \leq n-1} \|e_m^{k-1}\| \frac{1}{1-\delta}.
    \]
    Applying this inequality $k$ times yields the claim.
\end{proof}

We see that for stable fine solvers (in the sense that
$\delta \leq 1$), we obtain from
\cref{thm:general_convergence} similar superlinear and linear
convergence results as in \cite{gander2007analysis}. 
These assumptions are satisfied, for instance, for the heat equation or
finite-element discretizations thereof, when $F$ is
the exact solution operator in time or the result of an appropriate stable
time-discretization.  More precisely, we have $\delta <
1$ when (homogeneous) Dirichlet boundaries are present and $\delta =
1$ in the case of pure Neumann boundaries.
For $\delta < 1$, \cref{eq:error_bound_eps_delta,eq:error_bound_eps_delta2} show
that the influence of the error or source term contributions associated with time index $m$ 
on the error at time index $n$ decreases exponentially as $n-m$ increases.
We further note that the bound \cref{eq:error_bound_eps_delta2} does not depend on $T$
or the number of time intervals $N$.
Since the construction of the coarse solvers $G_n$ is entirely local, we obtain weak scalability
in the sense that the truncation rank $R$ does not need to be increased to maintain a constant
convergence rate as $N$ grows proportionally with $T$.

As an important special case, we consider the case of a
time-independent system with fixed fine solver $F'$, where
$G'$ is obtained by projecting $F'$ onto an invariant subspace.
In that case, we have the following exact representation of the error:
\begin{theorem}
    \label{thm:diagonal_convergence}
    Assume that $F' = F'_n$, $G' = G'_n$ for all $n \in \N$ and that
    $P$ is a projection (continuous, linear, idempotent operator) with
    $PF' = F'P$ and $G' = PF'P$.  Then for all $n, k \geq 0$ we have
    \begin{equation}
        e_n^k = \begin{cases}
            (F'-G')^ke_{n-k}^0 & n > k, \\
            0                & \text{otherwise}.
         \end{cases}
    \end{equation}
    Assuming $\|F' - G'\| \leq \varepsilon$, we obtain the improved estimate
    \begin{equation}
        \max_{1 \leq n \leq N}\|e_n^k\| \leq \varepsilon^k \max_{1 \leq n \leq N-k} \|e_n^0\|.
    \end{equation}
\end{theorem}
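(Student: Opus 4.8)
The plan is to exploit the commutation and idempotence hypotheses to collapse the coupled Parareal error recursion into a single one-step recursion in both indices. First I would record the algebraic consequences of the assumptions. Since $P^2 = P$ and $PF' = F'P$, we obtain $G' = PF'P = PF' = F'P$, hence $F'-G' = (I-P)F' = F'(I-P)$, and in particular $P(F'-G') = (P-P^2)F' = 0$ together with $PG' = P^2F' = PF' = G'$. The abstract error identity \cref{eq:base_error_identity} specializes in the time-independent case to
\[
    e_n^k = (F'-G')\,e_{n-1}^{k-1} + G'\,e_{n-1}^{k}, \qquad n,k \geq 1,
\]
with $e_0^k = 0$ for all $k$ (since $u_0^k = u_0$).

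The main step, which I expect to be the crux, is to decouple this recursion, which mixes the two iteration levels $k-1$ and $k$. The key claim is the kernel invariance $P e_n^k = 0$ (equivalently $G' e_n^k = F'P e_n^k = 0$) for all $n \geq 0$ and every $k \geq 1$. To establish it, I would apply $P$ to the recursion and use $P(F'-G') = 0$ and $PG' = G'$ to obtain $P e_n^k = G' e_{n-1}^k = F'P e_{n-1}^k$. Starting from $P e_0^k = 0$, a one-line induction on $n$ then yields $P e_n^k = 0$ for all $n$, for each fixed $k \geq 1$.

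With this invariance in hand the recursion collapses: for $k \geq 1$ the upper index of $e_{n-1}^{k}$ is at least one, so $G' e_{n-1}^{k} = 0$ and
\[
    e_n^k = (F'-G')\,e_{n-1}^{k-1}, \qquad n \geq 1,\ k \geq 1.
\]
Iterating this identity decreases both indices by one per step and can be applied $\min(n,k)$ times. If $n > k$ the chain terminates after $k$ steps at $e_{n-k}^0$, giving $e_n^k = (F'-G')^k e_{n-k}^0$; if $n \leq k$ it terminates at $e_0^{k-n}$ (for $n < k$) or at $e_0^0$ (for $n = k$), both of which vanish, giving $e_n^k = 0$. This is exactly the claimed case distinction. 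The improved estimate is then immediate: for $n > k$, submultiplicativity of the operator norm and $\|F'-G'\| \leq \varepsilon$ give $\|e_n^k\| \leq \varepsilon^k \|e_{n-k}^0\|$, and since the only nonvanishing contributions correspond to indices with $1 \leq n-k \leq N-k$, taking the maximum over $n$ yields the bound.

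The hard part is recognizing that the coupling between the two iteration levels in the recursion can be removed through the invariance $P e_n^k = 0$; once this decoupling observation is made and verified by the short induction on $n$, the remainder is a routine iteration together with a submultiplicative norm estimate.
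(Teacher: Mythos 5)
Your proposal is correct and follows essentially the same route as the paper's proof: both hinge on applying $P$ to the error identity, using $P(F'-G')=0$ and $PG'=G'$ to deduce $Pe_n^k=0$ by induction on $n$ (for $k\geq 1$), and then collapsing the recursion to $e_n^k=(F'-G')e_{n-1}^{k-1}$, which is iterated $\min(n,k)$ times. The only cosmetic difference is that the paper obtains the collapsed recursion by multiplying the identity with $I-P$, whereas you kill the term $G'e_{n-1}^k=F'Pe_{n-1}^k$ directly from the invariance---an equivalent step.
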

\begin{proof}
    The proof is the same as for the heat equation in
    \cite{GanderOhlbergerEtAl2024PararealAlgorithmCoarse}.
    Multiplying the error identity \cref{eq:base_error_identity} from the left with $P$
    yields
    \[
        Pe_{n}^{k} = P(F' - PF'P)e_{n-1}^{k-1} + P(PF'P)e_{n-1}^{k} = G'Pe_{n-1}^{k}.
    \]
    Hence, $Pe_{n}^{k} = (G')^{n}Pe_0^{k} = 0$.  Thus,
    multiplying \cref{eq:base_error_identity} by $I-P$ and using
    $(I-P)G' = (I-P)(PF'P) = PF'P - PF'P = 0$ yields
    \[
        e_{n}^{k} = (I-P)e_{n}^{k} = (I-P)(F'-G')e_{n-1}^{k-1} = (F'-G')(I-P)e_{n-1}^{k-1}
        = (F'-G')e_{n-1}^{k-1}.
    \]
    Recursively applying this identity yields $0$ when the time index reaches 0
    before the iteration count.  Otherwise, we get $e_n^k = (F'-G')^ke_{n-k}^0.$
\end{proof}

When $F'$ is a normal operator on a Hilbert space and $P$ an
orthogonal projection, $\|F'-G'\|$ is given by the maximum absolute
value of the neglected spectrum.  In the general (possibly) infinite-dimensional
case, we can formulate this as follows:
\begin{corollary}
    Assume that $F' = F'_n$, $G' = G'_n$ for all $n \in \N$.
    Assume that $F'$ is normal with spectral decomposition
    \[
        F' = \int_{\sigma(F')}\lambda d\mu_{F'}(\lambda), \qquad\text{and let}\qquad
        G' = \int_\Gamma \lambda d\mu_{F'}(\lambda),
    \]
    for some Borel measurable $\Gamma \subseteq \sigma(F')$.
    Then we have
    \begin{equation}
        \max_{1 \leq n \leq N}\|e_n^k\|
            \leq \Big(\sup_{\lambda \in \sigma(F')\setminus \Gamma} |\lambda|\Big)^k
                \cdot\max_{1 \leq n \leq N-k} \|e_n^0\|.
    \end{equation}
\end{corollary}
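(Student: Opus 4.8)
The plan is to reduce the corollary to \cref{thm:diagonal_convergence} by exhibiting a projection $P$ that realizes the spectral splitting described by $\Gamma$. Concretely, I would set
\[
    P := \mu_{F'}(\Gamma) = \int_\Gamma d\mu_{F'}(\lambda),
\]
the spectral projection of $F'$ onto the part of the spectrum lying in $\Gamma$. As a value of the projection-valued measure $\mu_{F'}$, the operator $P$ is automatically bounded, linear, and self-adjoint, and idempotency follows from the multiplicativity of the spectral measure, $\mu_{F'}(\Gamma)\mu_{F'}(\Gamma) = \mu_{F'}(\Gamma \cap \Gamma) = \mu_{F'}(\Gamma)$. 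Hence $P$ is an orthogonal projection, in particular a projection in the sense required by \cref{thm:diagonal_convergence}.

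Next I would verify the two structural hypotheses $PF' = F'P$ and $G' = PF'P$. The commutation relation is a standard property of the Borel functional calculus: $F'$ lies in the commutative von Neumann algebra generated by $\mu_{F'}$, so it commutes with every spectral projection $\mu_{F'}(\Gamma)$. For the second identity I would use that pre- and post-composing with $P$ corresponds to multiplying the integrand by the indicator $\chi_\Gamma$, whence, using $\chi_\Gamma^2 = \chi_\Gamma$,
\[
    PF'P = \int_{\sigma(F')} \chi_\Gamma(\lambda)\,\lambda\,\chi_\Gamma(\lambda)\,d\mu_{F'}(\lambda)
         = \int_\Gamma \lambda \, d\mu_{F'}(\lambda) = G'.
\]
This places us exactly in the setting of \cref{thm:diagonal_convergence}.

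It then remains to identify $\|F' - G'\|$. Writing
\[
    F' - G' = \int_{\sigma(F')\setminus\Gamma} \lambda \, d\mu_{F'}(\lambda),
\]
this is again a normal operator, and by the spectral theorem its norm equals the $\mu_{F'}$-essential supremum of $|\lambda|$ over $\sigma(F')\setminus\Gamma$, which is in turn bounded by $\sup_{\lambda \in \sigma(F')\setminus\Gamma}|\lambda| =: \varepsilon$. Thus $\|F' - G'\| \leq \varepsilon$, and the improved estimate of \cref{thm:diagonal_convergence} immediately yields the claimed bound.

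The argument is essentially bookkeeping once the spectral machinery is invoked; the only genuinely delicate point I anticipate is the norm identity, where one must pass from the supremum over the topological spectrum to the essential supremum with respect to $\mu_{F'}$. Since the corollary only asserts an upper bound, this subtlety is harmless: the plain supremum always dominates the essential supremum, so replacing $\|F' - G'\|$ by $\varepsilon$ is legitimate. I therefore expect the write-up to be very short, as all the substantive work has already been carried out in \cref{thm:diagonal_convergence}.
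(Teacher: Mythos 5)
Your proof is correct and takes essentially the same route the paper intends: the corollary is presented as an immediate consequence of \cref{thm:diagonal_convergence}, obtained by taking $P = \mu_{F'}(\Gamma)$ to be the spectral projection, verifying $PF' = F'P$ and $G' = PF'P$ via the functional calculus, and bounding $\|F' - G'\|$ by the supremum of $|\lambda|$ over the neglected spectrum $\sigma(F')\setminus\Gamma$. Your remark that the operator norm is only the $\mu_{F'}$-essential supremum, which the plain supremum dominates, correctly handles the one subtlety the paper leaves tacit.
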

In particular, when $F'$ is self-adjoint, the SVD of $F'$ agrees with its spectral decomposition, and we
obtain:
\begin{corollary}
    \label{thm:spectral_bound}
    Assume that $F' = F'_n$, $n \in \N$, is self-adjoint and $G$ the low-rank coarse solver for truncation
    rank $R$.
    Then
    \begin{equation}
        \max_{1 \leq n \leq N}\|e_n^k\|
            \leq \sigma_{R + 1}^k \cdot\max_{1 \leq n \leq N-k} \|e_n^0\|.
    \end{equation}
\end{corollary}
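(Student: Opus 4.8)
The plan is to obtain this as a direct specialization of the preceding corollary, whose hypotheses concern a general normal operator $F'$ together with a spectral projection onto a measurable subset $\Gamma$ of the spectrum. The only genuine content is to verify that, for self-adjoint $F'$, the spectral coarse solver of truncation rank $R$ \emph{is} a spectral projection of $F'$ onto the part of the spectrum of largest modulus, and that the supremum of the modulus of the neglected spectrum equals $\sigma_{R+1}$. Everything else is then a substitution.

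First I would recall that a compact self-adjoint $F'$ admits an orthonormal eigenbasis, so that $F'v = \sum_r \lambda_r (\xi_r, v)_V\,\xi_r$ with real eigenvalues $\lambda_r$ and orthonormal $\xi_r$. Ordering the eigenvalues by decreasing modulus, the singular values of \cref{eq:svd} are $\sigma_r = |\lambda_r|$, and one may take the right singular vectors $\varphi_r = \xi_r$ and the left singular vectors $\psi_r = \operatorname{sign}(\lambda_r)\,\xi_r$, since then $\sigma_r (\varphi_r, v)_V\,\psi_r = \lambda_r (\xi_r, v)_V\,\xi_r$. This makes precise the claim that the SVD of $F'$ agrees with its spectral decomposition.

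Second I would identify $G'$ as a spectral projection. With the above choice, the truncated solver \cref{eq:spectral_G} reads $G'v = \sum_{r=1}^{R} \lambda_r (\xi_r, v)_V\,\xi_r = PF'P\,v$, where $P$ is the $V$-orthogonal projection onto $\Span\{\xi_1,\dots,\xi_R\}$. Because this subspace is spanned by eigenvectors of $F'$, the projection $P$ commutes with $F'$, so $G'$ coincides with the spectral projection $\int_\Gamma \lambda\, d\mu_{F'}(\lambda)$ associated with $\Gamma := \{\lambda_1,\dots,\lambda_R\}$, i.e. the $R$ eigenvalues of largest modulus counted with multiplicity.

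Finally I would apply the preceding corollary with this choice of $\Gamma$. The neglected spectrum is $\sigma(F')\setminus\Gamma = \{\lambda_{R+1},\lambda_{R+2},\dots\}$, whose largest modulus is $|\lambda_{R+1}| = \sigma_{R+1}$, so that $\sup_{\lambda \in \sigma(F')\setminus\Gamma}|\lambda| = \sigma_{R+1}$; substituting this into the conclusion of that corollary yields the stated estimate. The one point requiring care — rather than a genuine obstacle — is the consistent handling of signs and multiplicities when matching singular values to eigenvalues (note that $\lambda$ and $-\lambda$ contribute the same singular value $|\lambda|$), so that $\Gamma$ captures precisely the top $R$ modes and the supremum over the remainder is exactly $\sigma_{R+1}$ and not larger.
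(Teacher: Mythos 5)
Your proposal is correct and follows the same route as the paper, which justifies \cref{thm:spectral_bound} by the single observation that for self-adjoint $F'$ the SVD coincides with the spectral decomposition, so the preceding corollary applies with $\Gamma$ the top $R$ eigenvalues by modulus and $\sup_{\lambda\in\sigma(F')\setminus\Gamma}|\lambda| = \sigma_{R+1}$. You merely spell out the details the paper leaves implicit (the identification $\sigma_r = |\lambda_r|$, $\psi_r = \operatorname{sign}(\lambda_r)\,\xi_r$, and $G' = PF'P$ as a spectral projection), all of which are accurate.
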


To calculate the error bound \cref{eq:error_bound_eps_delta2},
only the quantities \cref{eq:bounds_delta_eps}
and $\|b_m\|$ are required, which are all known after the setup phase of the algorithm. 
However, while
\cref{eq:error_bound_eps_delta2} provides some insight into the
expected convergence speed of the Parareal algorithm, it significantly overestimates
the error in practice.
Thus, we further derive an a posteriori error
bound, which takes the current Parareal updates $u_n^{k} - u_n^{k-1}$
into account:

\begin{theorem}
    With the assumptions of \cref{thm:general_convergence}, for any $1
    \leq n \leq N$ and $k \in \N$ we have the a posteriori error bound
    \begin{equation}
        \label{eq:apost_bound}
        \|e_n^k\|
            \leq \varepsilon \sum_{m=1}^{n-1} \delta^{n-m-1} \|u^k_m - u^{k-1}_m\|.
    \end{equation}
    If $\delta < 1$, we additionally have
    \begin{equation}
        \label{eq:apost_bound_sup}
        \max_{1 \leq n \leq N}\|e^k_n\| \leq \frac{\varepsilon}{1-\delta} \cdot \max_{1 \leq n \leq N}\|u^k_n - u^{k-1}_n\|.
    \end{equation}
\end{theorem}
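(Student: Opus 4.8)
The plan is to convert the a priori error identity from \cref{thm:general_convergence}'s underlying lemma into a recursion that involves only the current increments $u_m^k - u_m^{k-1}$, which are computable at runtime, rather than the previous-iteration errors $e_m^{k-1}$, which are not. To this end I introduce the shorthand $d_m^k := u_m^k - u_m^{k-1}$ and record the trivial identity $e_m^{k-1} = e_m^k - d_m^k$, which follows at once from $e_m^k - e_m^{k-1} = u_m^k - u_m^{k-1}$.

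Starting from the error identity \cref{eq:base_error_identity}, namely $e_n^k = (F_n' - G_n')e_{n-1}^{k-1} + G_n'e_{n-1}^k$, I would substitute $e_{n-1}^{k-1} = e_{n-1}^k - d_{n-1}^k$. The two $G_n'e_{n-1}^k$ contributions combine with the $F_n'e_{n-1}^k$ term, and the result is the clean recursion in the single index $n$,
\[
    e_n^k = F_n'e_{n-1}^k - (F_n' - G_n')d_{n-1}^k .
\]
This is the crux of the argument: it eliminates the iteration-$(k-1)$ error entirely in favour of $e_{n-1}^k$ at the \emph{same} iteration and the a posteriori increment $d_{n-1}^k$.

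Next, using $e_0^k = 0$ and $d_0^k = 0$ (since $u_0^j = u_0$ for all $j$), I would unroll this first-order linear recursion in $n$, obtaining
\[
    e_n^k = -\sum_{m=1}^{n-1} F_n'\cdots F_{m+2}'\,(F_{m+1}' - G_{m+1}')\,d_m^k ,
\]
where the operator product $F_n'\cdots F_{m+2}'$ contains $n-m-1$ factors (and is the identity when $m=n-1$). Taking norms and inserting $\|F_j'\| \leq \delta$ together with $\|F_j' - G_j'\| \leq \varepsilon$ from \cref{eq:truncation_error} yields \cref{eq:apost_bound} directly. For \cref{eq:apost_bound_sup} under $\delta < 1$, I would bound each $\|d_m^k\|$ by $\max_{1\leq n\leq N}\|d_n^k\|$, pull it out of the sum, dominate the remaining geometric sum $\sum_{m=1}^{n-1}\delta^{n-m-1}$ by $\sum_{l=0}^\infty \delta^l = 1/(1-\delta)$, and then take the maximum over $n$.

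The main obstacle I expect is the substitution step itself: recognising that replacing $e_{n-1}^{k-1}$ by $e_{n-1}^k - d_{n-1}^k$ collapses the two-index error recursion into a one-index recursion in $n$ at fixed $k$, driven precisely by the computable updates $d_m^k$. Everything afterwards is the same telescoping of a linear recursion and the same geometric-series estimate already used in the proof of \cref{thm:general_convergence}, so no further difficulty should arise.
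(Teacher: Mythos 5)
Your proof is correct and takes essentially the same approach as the paper: your recursion $e_n^k = F_n'e_{n-1}^k - (F_n'-G_n')d_{n-1}^k$ is exactly the paper's key error identity, which the paper derives by adding and subtracting $F_nu_{n-1}^k$ rather than by substituting $e_{n-1}^{k-1} = e_{n-1}^k - d_{n-1}^k$ into \cref{eq:base_error_identity}. The only other (immaterial) difference is that the paper takes norms first and unrolls the resulting scalar inequality, whereas you unroll the operator identity and then take norms.
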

\begin{proof}
    We use the error identity
    \begin{align*}
        e_{n}^{k}
        &= u_n^{k} - F_nu_{n-1}^k + (F_nu_{n-1}^k - F_{n}u_{n-1}) \\
        &= F_nu_{n-1}^{k-1} + G_nu_{n-1}^k - G_nu_{n-1}^{k-1} - F_nu_{n-1}^k + F_n'e_{n-1}^k \\
        &= (F_n'-G_n')(u_{n-1}^{k-1} - u_{n-1}^k) + F_n'e_{n-1}^k.
    \end{align*}
    Taking norms then leads to
    \[
        \|e_n^k\| \leq \varepsilon \| u_{n-1}^k - u_{n-1}^{k-1}\| + \delta \|e_{n-1}^{k}\|.
    \]
    Applying this inequality repeatedly to its own right-hand side and noting that $e_0^k = u_0^k -
    u_0^{k-1} = 0$, we obtain \cref{eq:apost_bound}.
    For $\delta < 1$, we obtain \cref{eq:apost_bound_sup} from \cref{eq:apost_bound} using
    $\sum_{m=1}^{n-1}\delta^{n-m-1} \leq 1/(1-\delta)$.
\end{proof}
As for the a priori bound \cref{eq:error_bound_eps_delta2}, the quantities $\delta$ and $\varepsilon$
are known after the setup phase.
Thus, to evaluate \cref{eq:apost_bound,eq:apost_bound_sup} while the algorithm converges,
only the norms of the Parareal updates have to be computed.

\begin{figure}[t]
    \begin{center}
        \includegraphics{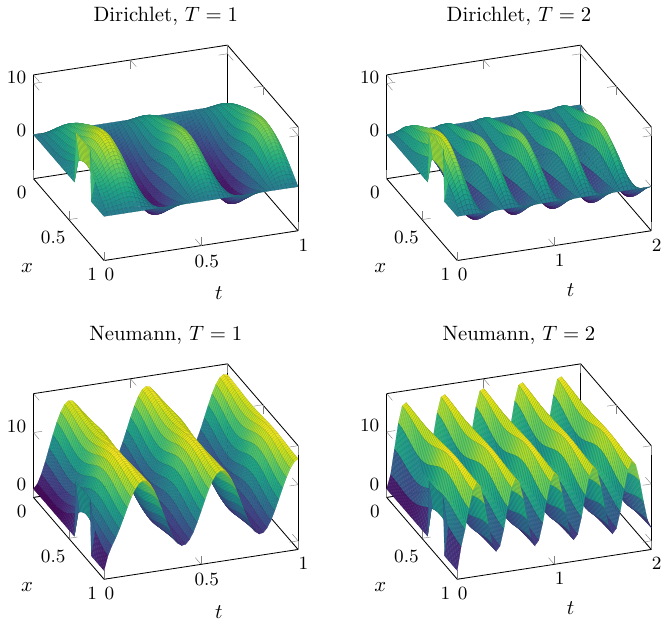}
    \end{center}
    \caption{%
      Experiment 1: Solutions of test problem \cref{eq:oned_heat} with
      different final times $T$ and boundary conditions.  }
    \label{fig:ex1_solution}
\end{figure}

\section{Numerical experiments}\label{sec:numexp}

We study the efficiency of the proposed low-rank coarse solvers in several
experiments of increasing complexity. In all cases, we use linear
finite elements for the spatial discretization and backward Euler
time-stepping with equidistant time-step size as the fine solver $F$.
The error between the fully discrete solution and its Parareal
approximation is measured in the Euclidean norm of the solution degrees of freedom
vectors (denoted as $\ell^2$-error).
Within each Parareal time-interval $[t_{n-1}, t_n)$, we approximate the solution
by considering the trajectory obtained from evaluating the fine solver
$F_n$ on $u_{n-1}^k$.

Our software implementation is based on the
\texttt{pyMOR}\footnote{\url{https://pymor.org}} model order reduction
library \cite{MilkRaveEtAl2016PyMORGenericAlgorithms}.  In particular,
we use \texttt{pyMOR}'s randomized SVD algorithm with no power
iterations and a variable number $p$ of oversampling vectors to
compute the low-rank coarse solvers.  We will compare the proposed
randomized SVDs with ``exact'' SVDs computed using
\texttt{SciPy}'s\footnote{\url{https://scipy.org}}
\cite{2020SciPy-NMeth} \texttt{ARPACK}
\cite{LehoucqSorensenEtAl1998ARPACKUsersGuide} wrapper.  In all cases,
we use a fixed truncation rank $R$ for all $G_n'$.  For the one- and
two-dimensional test-cases, we use \texttt{pyMOR}'s built-in
discretization toolkit, whereas experiment~4 uses
\texttt{scikit-fem}\footnote{\url{https://scikit-fem.readthedocs.io}}
\cite{GustafssonMcBain2020ScikitfemPythonPackage} for the
three-dimensional finite-element discretization.  The Parareal
algorithm is implemented with distributed-memory parallelization using
\texttt{mpi4py}\footnote{\url{https://mpi4py.readthedocs.io/}}
\cite{DalcinFang2021Mpi4pyStatusUpdate}.  All code and data needed to
conduct the numerical experiments is available on GitHub\footnote{\url{https://github.com/sdrave/LowRankParareal/tree/v2}}
and Zenodo \cite{ZenodoRecord}.

All experiments were executed on a dual-socket server equipped with two Intel
Xeon~Gold~6254 CPUs with 18 physical cores each.
A single thread per \texttt{MPI} rank was used for all
computations.  The server was running under \texttt{Ubuntu}~22.04,
using \texttt{CPython} 3.11.10 from the
\texttt{python-build-standalone}\footnote{\url{https://github.com/astral-sh/python-build-standalone}}
project.  All \texttt{Python} packages were obtained from
PyPI\footnote{\url{https://pypi.org}} in the exact versions specified
by the \texttt{uv.lock}\footnote{\url{https://docs.astral.sh/uv/}}
file contained in \cite{ZenodoRecord}.

\begin{figure}[t]
    \begin{center}
        \includegraphics{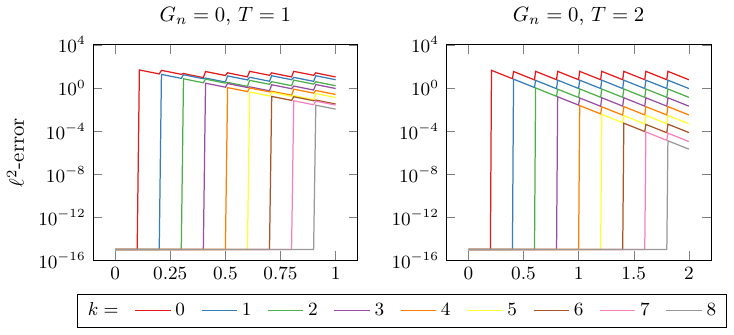}
    \end{center}
    \caption{%
      Experiment 1 with homogeneous Dirichlet conditions:
      $\ell^2$-errors of Parareal approximations over time for an
      increasing number of Parareal iterations; $G_n = 0$, 10 Parareal
      subintervals of different length by varying the final time $T$.
    }
    \label{fig:ex1_dirichlet_zero_G}
\end{figure}

\subsection{Experiment 1}

As the most basic test case, we consider a one-dimensional heat
equation on the unit interval,
\begin{equation}
    \label{eq:oned_heat}
    \begin{aligned}
        u_t(x,t) - u_{xx}(x,t) &= f(x,t) := 100\cdot\sin(5\pi t)\cdot(1+\cos(3\pi x)) ,\\
        u(x, 0) &= u_0(x) = 10 \chi_{[0.6, 0.8]},
    \end{aligned}
\end{equation}
$x \in (0,1)$, $t\in [0,T]$ for varying final time $T$.
If we apply homogeneous Dirichlet boundary conditions
\[
    u(0, t) = 0, \qquad u(1, t) = 0,
\]
then from Fourier analysis we know that the analytical solution of \cref{eq:oned_heat} is given by
\begin{equation}
    \label{eq:fourier_solution_formula}
    \begin{aligned}
        u(x, t) &= \sum_{m=1}^\infty \hat u_m(t)\sqrt{2}\sin(m\pi x) \\
        \hat u_m(t) &= \hat u_{0,m}e^{-m^2\pi^2t} + \int_0^t \hat f_m(\tau) e^{-m^2\pi^2(t-\tau)} \dtau,
    \end{aligned}
\end{equation}
where $\hat f_m$ and $\hat u_{0, m}$ denote the coefficients of the
Fourier sine series for $f$ and $u_0$.  Discretizing using a uniform
mesh with 100 elements in space and $100\cdot T$ time steps, we obtain
the solutions shown in the top row of \cref{fig:ex1_solution}.

\begin{figure}[t]
    \begin{center}
        \includegraphics{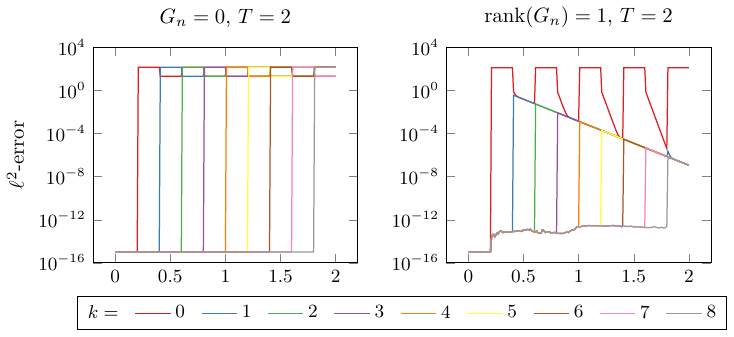}
    \end{center}
    \caption{%
      Experiment 1 with homogeneous Neumann conditions:
      $\ell^2$-errors of Parareal approximations over time for an
      increasing number of Parareal iterations;
      left: $G_n=0$, right: rank-1 $G_n$ with constant Fourier mode.
    }
    \label{fig:ex1_neu}
\end{figure}

\begin{figure}[t]
    \begin{center}
        \includegraphics{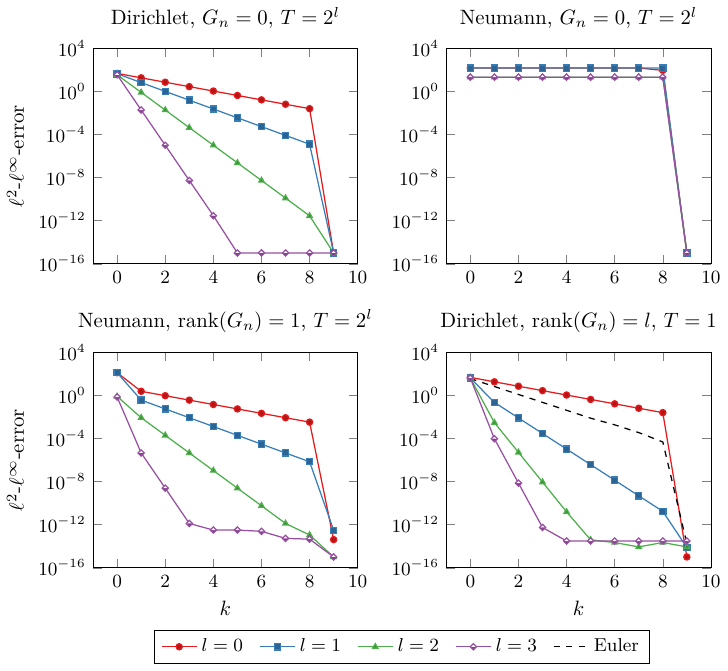}
    \end{center}
    \caption{%
      Experiment 1 with different boundary conditions and coarse
      solvers $G_n$.  Maximum $\ell^2$-errors over time vs.\ number
      of Parareal iterations for different final times $T = 2^l$ (top
      left/right, bottom left) and ranks $R$ of $G_n$ (bottom right).
      The dashed line in the bottom-right plot indicates the Parareal
      errors when the $G_n$ are given by a single backward Euler step.
    }
    \label{fig:ex1_max_errs}
\end{figure}

Now, for each $T$ we partition the time domain $[0,T]$ into 10
subintervals of the same length, resulting in $10\cdot T$ time steps per interval.
From \cref{eq:fourier_solution_formula},
we can expect that the linear part $F'$ of the fine
solver has an exponentially decaying spectrum and that the decay rate
increases with the length of the Parareal time intervals.  As all
Fourier modes decay exponentially over time, we expect
convergence even for $G = 0$, similar to the results in
\cite{GanderOhlbergerEtAl2024PararealAlgorithmCoarse}.
Indeed, in \cref{fig:ex1_dirichlet_zero_G},
which shows the error of the Parareal
iterates over time, we observe that the Parareal algorithm converges with
a rate that rapidly accelerates with increasing interval length.
The $\ell^2$-in-space $l^\infty$-in-time error is shown in the top-left plot of
\cref{fig:ex1_max_errs}.

If we replace the Dirichlet boundary conditions by Neumann boundary conditions,
\[
    u_x(0, t) = 0 \qquad u_x(1, t) = 0,
\]
we obtain a constant Fourier mode, which is not damped over time.
Hence, an error in the mean of the solution is propagated over the
entire time interval $[0, T]$, and simply choosing $G=0$ fails
miserably (\cref{fig:ex1_neu}, left and \cref{fig:ex1_max_errs},
top right).  To account for the constant mode, we now consider our
first low-rank coarse solvers of the form \cref{eq:spectral_G}
with the a priori choices $R_n = 1$, $\sigma_{n,1} = 1$ and
$\varphi_{n,1}=\psi_{n,1}$ being the finite-element coefficient vector
corresponding to the constant function with value 1.  In order to
correctly project onto the constant Fourier mode, we choose the
$L^2$-inner product in the definition of the $G_n$
\cref{eq:spectral_G}.  To make the results more comparable with the
case $G_n = 0$, we set $b_n = 0$ for all $n$.

As shown in the right plot of \cref{fig:ex1_neu} and the bottom-left plot of \cref{fig:ex1_max_errs}, we
successfully recover the convergence of the Parareal algorithm as observed in the Dirichlet case.

\begin{figure}[t]
    \begin{center}
        \includegraphics{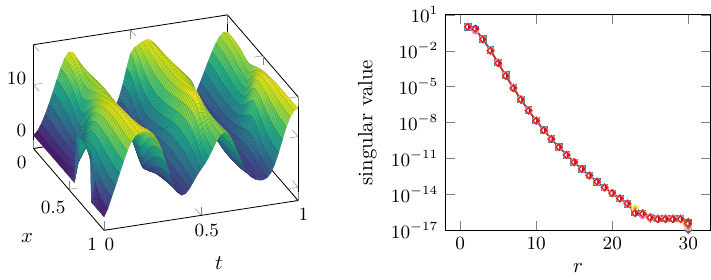}
    \end{center}
    \caption{%
        Experiment 2: Solution of test problem \cref{eq:ex2} (left) and singular values of the fine
        solvers $F_n'$ (right).
        The singular values only differ slightly with respect to the chosen time interval.
    }
    \label{fig:ex2_solution_svals}
\end{figure}

Returning to Dirichlet conditions, we finally study the effect of including higher-order Fourier modes
$\sqrt{2}\sin(m\pi x)$ into the definition of $G$ (again setting $b_n = 0$).
In the bottom-right plot of \cref{fig:ex1_max_errs}, we see that already
two Fourier modes suffice to let Parareal converge to machine precision after only five iterations.
This observed exponential convergence of our low-rank Parareal algorithm w.r.t.\ the rank of $G_n$ agrees
with \cref{thm:spectral_bound} if we take into account that by \cref{eq:fourier_solution_formula}, we have
\[
    \sigma_{R+1} = e^{-(R+1)^2\pi^2\Delta T}.
\]
A direct analysis of this case is contained in \cite{GanderOhlbergerEtAl2024PararealAlgorithmCoarse}.
We stress that already with a single Fourier mode, our algorithm converges much faster than a classical
Parareal method using a single backward Euler step as coarse solver (dashed line in
\cref{fig:ex1_max_errs}).

\subsection{Experiment 2}
In experiment~1, we considered the special case where all fine solvers
$F_n$ are equal and known analytically (up to discretization).  We now
lift both these assumptions by considering a one-dimensional heat
equation with a heat conductivity that varies both in space and time:
\begin{equation}
    \label{eq:ex2}
    \begin{gathered}
        u_t(x,t) - \partial_x[(1 + 0.9\sin(7\pi t + 2\pi x))u_{x}(x,t)] = f(x,t) ,\\
        u(x, 0) = u_0(x), \qquad u_x(0, t) = 0, \qquad u_x(1, t) = 0.
    \end{gathered}
\end{equation}
Here, the source term $f$ is again given by \cref{eq:oned_heat}, $x
\in (0,1)$ and $t \in [0,1]$.  As before, we discretize the problem
with a uniform mesh with 100 elements in space and $100$ time steps.
The solution is shown in \cref{fig:ex2_solution_svals} on the left.

\begin{figure}[t]
    \begin{center}
        \includegraphics{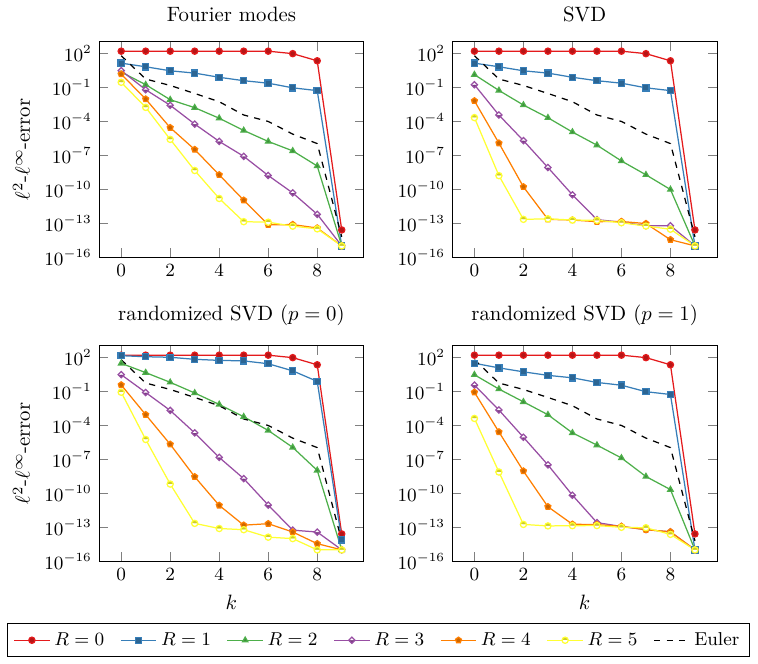}
    \end{center}
    \caption{%
        Experiment 2:
        Maximum $\ell^2$-errors over time vs.\ number of Parareal iterations for different choices and 
        ranks of $G_n$.
        The dashed lines indicate the Parareal errors when the $G_n$ are given by a single backward Euler
        step.
    }
    \label{fig:ex2_max_errs}
\end{figure}

We partition the time domain $[0,1]$ into 10 equally-sized
subintervals of 10 time steps each. Due to the time-dependence of the heat conductivity,
the resulting fine solvers $F_n'$ are no longer normal.  We plot the
decay of the singular values of each $F_n'$ in
\cref{fig:ex2_solution_svals} on the right.  We observe that the singular values
decay rapidly and almost identically for each time-interval $[t_{n-1},
  t_n]$.  Thus, we expect good performance of the low-rank coarse
solvers already for a small truncation rank $R$.  In
\cref{fig:ex2_max_errs}, we show the maximum $\ell^2$-error over time
with respect to the number of Parareal iterations $k$ and truncation ranks $R$
for different choices of coarse solvers.  The top-left plot shows the
error for a naive a~priori choice of the $G_n$ using the Fourier modes
$\sqrt{2}\cos(m\pi x)$.  We compare this choice to
SVD-based coarse solvers, where the SVD is either computed exactly
(top-right plot), or using a randomized SVD with no ($p = 0$) or a
single ($p = 1$) oversampling vector (bottom plots).  We now include
$b_n$ in the definitions of both the Fourier- and SVD-based coarse
solvers.

In all cases, for $R \geq 3$, Parareal with theses coarse solvers
converges significantly faster compared to Parareal with $G_n$ given
by a single Euler step.  We also observe that
using an SVD to define $G_n$ instead of using Fourier modes greatly
improves the convergence speed when higher-order modes ($R \geq 2$)
are used.  In particular, for $R = 5$ the SVD-based Parareal reaches
an error of $10^{-13}$ already for $k=2$, whereas with Fourier modes,
this error is only reached at $k=5$.  As computing an exact SVD of the
fine solvers $F_n'$ using standard methods is prohibitively expensive,
it is important to note that using a randomized SVD with only a single
oversampling vector already performs almost as well as using an exact
SVD.  Using no oversampling at all, however, the randomized SVD cannot
improve the convergence speed for $R \leq 2$ compared to using Fourier modes.

\begin{figure}[t]
    \begin{center}
        \includegraphics{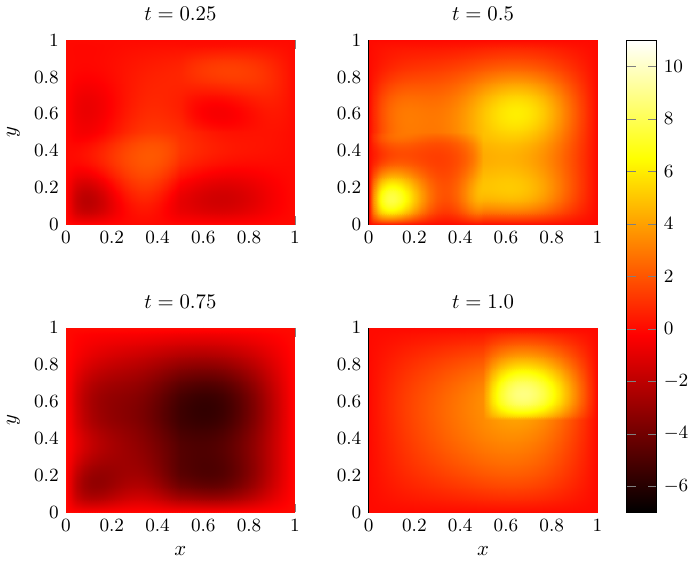}
    \end{center}
    \caption{%
        Experiment 3: Solution of test problem \cref{eq:ex3} at different times $t$.}
    \label{fig:ex3_solution}
\end{figure}

\subsection{Experiment 3}

We proceed with a computationally more challenging problem by considering the
two-dimensional heat equation given by
\begin{equation}
    \label{eq:ex3}
    \begin{aligned}
        u_t(x,y,t) - \nabla_{x,y} \cdot \left[ d(x,y,t) \nabla_{x,y} u(x,y)\right] &=  f(x,y,t) & (x,y) &\in \Omega, \\
        u(x, y, 0) &= u_0(x, y) & (x,y) &\in \Omega, \\
        u(x, y, t) &= 0 & (x,y) &\in \partial\Omega,
    \end{aligned}
\end{equation}
with $\Omega := (0,1)^2$, $t \in [0,1]$ and
\begin{align*}
    d(x,y,t) &= 1 + 0.9 \sin(7 \pi t) \chi_{[0,1/2]^2}(x,y) + 0.9 \cos(5 \pi t) \chi_{[1/2,1]^2}(x,y), \\
    f(x,y,t) &= 100\sin(5 \pi t)(1 + \cos(3\pi x))(1 + \sin(4\pi y)), \\
    u_0(x,y) &= 10\cdot\chi_{[0.6,0.8]^2}(x,y).
\end{align*}
For the discretization, we use a triangular mesh with $4\cdot 100 \cdot 100$ elements and $100$ time
steps ($\Delta t = 0.01$).
The solution at times $t = 0.25,\,0.5,\,0.75,\,1.0$ is visualized in \cref{fig:ex3_solution}.

\begin{figure}[t]
    \begin{center}
        \includegraphics{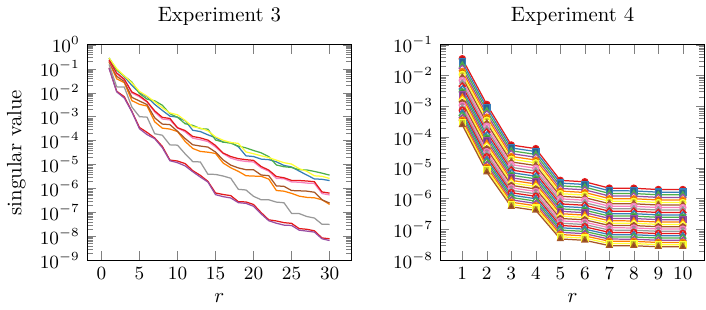}
    \end{center}
    \caption{%
      Singular values of the fine solvers $F_n'$
      (left: experiment~3, right: experiment~4).  }
    \label{fig:ex3_ex4_svals}
\end{figure}
\begin{figure}[t]
    \begin{center}
        \includegraphics{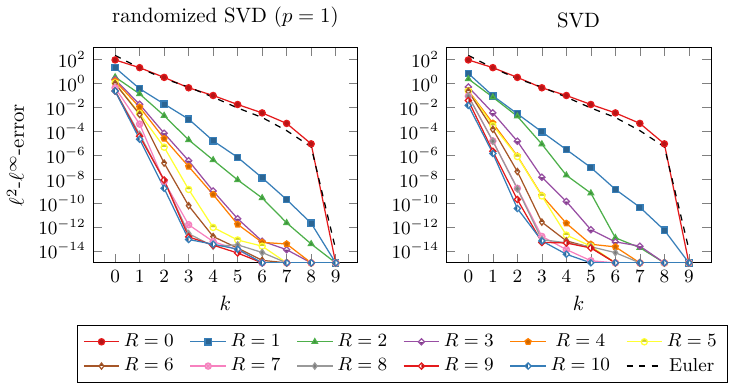}
    \end{center}
    \caption{%
        Experiment 3: Maximum $\ell^2$-errors over time vs.\ number of Parareal iterations for different
        ranks $R$ of $G_n$.
        The dashed lines indicate the Parareal errors when the $G_n$ are given by a single backward Euler
        step.
    }
    \label{fig:ex3_max_errs}
\end{figure}

As before, we consider 10 equally-sized time intervals of 10 time steps each. Compared to
experiment~2, we observe a slower decay of the singular values of the
fine solvers in \cref{fig:ex3_ex4_svals} (left), and the decay rate now
visibly depends on the selected time interval.  Still, the decay is
exponential and SVD-based coarse solvers of rank 1 are already
sufficient to outperform single Euler-step coarse solvers
(\cref{fig:ex3_max_errs}).
For 5 Parareal iterations, which require at least half the time to solve
the PDE sequentially in time, Parareal with single Euler-step coarse solvers
only reduces the error to $10^{-2}$.
Using an exact SVD with truncation rank $R=1$, an error of $10^{-7}$ is attained
for the same number of iterations.
With truncation rank $R=5$, an error of $10^{-6}$ is reached already for $k=2$, whereas with
single Euler-step solvers, this error is not attained until the fine solvers
have propagated through the entire time domain. When a randomized SVD
with a single oversampling vector ($p = 1$) is used instead of an exact SVD, convergence is
slightly slower.  $R=1$ is still sufficient to obtain a significantly
faster error decay compared to single Euler-step coarse solvers.  To
reach an error of $10^{-6}$ at $k=2$, the rank of the coarse solvers
has to be incremented by one to $R=6$.

\DeclareRobustCommand{\showsdirkmark}{\tikz \node[semithick]{\pgfuseplotmark{o}};}
\DeclareRobustCommand{\showsdirklmark}{\tikz \node[semithick]{\pgfuseplotmark{x}};}
\DeclareRobustCommand{\showsdirkntmark}{\tikz \node[semithick]{\pgfuseplotmark{triangle}};}
\DeclareRobustCommand{\showsdirklntmark}{\tikz \node[semithick]{\pgfuseplotmark{square}};}
\begin{figure}[t]
    \begin{center}
        \includegraphics{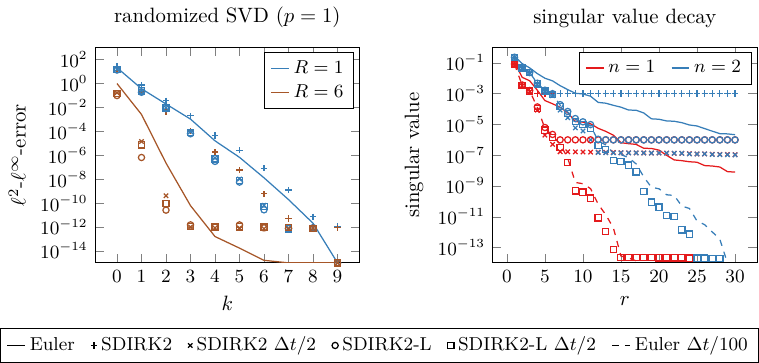}
    \end{center}
    \caption{%
        Experiment 3: Effect of the choice of time-stepping method on Parareal convergence;
        left: Maximum $\ell^2$-errors over time vs.\ number of Parareal iterations for different
        ranks $R$ of $G_n$, right: singular values of the fine solvers $F_n'$ for different time
        intervals $[t_{n-1}, t_n]$.
        The time-stepping method used is indicated by the line style and marker type. ``$\Delta t / 2\!$'' and
        ``$\Delta t / 100\!$'' indicate that the time-step size of the fine solver has been scaled by a
        factor of $1/2$ or $1/100$ compared to the standard time-step size for this experiment.
    }
    \label{fig:ex3_sdirk}
\end{figure}

\subsubsection{Effect of time discretization} \label{sec:effect_time_disc}

The effectiveness of our method depends on the rapid singular value decay of the fine solvers $F_n'$.
While such a decay is theoretically guaranteed for problems of parabolic type when $F_n'$ is the
analytical solution operator, the chosen time-stepping method might have a negative effect on the
singular value decay.
To study such a potential effect, we repeat experiment~3 for two different second-order Runge-Kutta
methods (SDIRK2, SDIRK2-L) with the following Butcher tableaus:
\[
    \text{SDIRK2: } \begin{array}{c|cc}
        1 & 1 &  \\
        0 & -1 & 1 \\
        \hline
        & 1/2 & 1/2
     \end{array}
    \qquad\qquad
    \text{SDIRK2-L: }  \begin{array}{c|cc}
        \gamma & \gamma &  \\
        1      & 1-\gamma & \gamma \\
        \hline
        & 1-\gamma & \gamma 
    \end{array} \quad \gamma = \frac{2-\sqrt{2}}{2}.
\]
Both methods belong to the class of singly-diagonally implicit Runge-Kutta (SDIRK) methods.
When using direct linear solvers, SDIRK-2 is attractive for solving linear ODE systems with
time-dependent operators, as it does not require any additional matrix factorizations compared to the
backward Euler method.
SDIRK-2 is not \mbox{L-stable}, however.
In contrast, the SDIRK2-L method is L-stable, but requires twice as many matrix factorizations.

For both time steppers, we consider the corresponding Parareal method with ran\-dom\-ized-SVD-based coarse
solvers. The convergence of these schemes is compared against the backward Euler method in the left
plot of \cref{fig:ex3_sdirk}.
In all cases, the error is computed with respect to the time-discrete solution obtained with the same time-stepping method.

We observe that for truncation rank $R=1$, all three time-stepping methods yield comparable convergence
rates, with SDIRK2-L showing the fastest convergence.
For $R=6$, however, the convergence rate for SDRIK2 stagnates compared to the other methods.
Indeed, for SDIRK2, the decay of the singular values of the corresponding fine solvers $F_n'$
(right plot of \cref{fig:ex3_sdirk}), stagnates already at a value of $10^{-3}$.
This effect can be explained by the lack of L-stability, which causes a bad approximation of solution
components associated with small singular values of the analytical solution operator.
For SDIRK2-L, we observe a much faster singular value decay, although a plateau with slow decay at
$10^{-7}$ is still present.
We attribute this plateau to the fact that, while being L-stable, the stability function $R(z)$ of the
SDIRK2-L method is not monotonically decreasing for $z \to -\infty$.
In contrast, we observe no such plateau for the backward Euler method, which has a monotonically
decreasing stability function.
However, being a first-order method, the overall approximation of the analytical solution operator is not
very good:
When considering a significantly smaller time-step size ($\Delta t / 100$), we observe a much faster
singular value decay of the $F_n'$, which is well-matched by SDIRK2-L before reaching the plateau.

Overall, we see that a bad approximation of the analytical solution operator by the chosen time-stepping
method can indeed affect the performance of our method.
In the case of experiment 3, halving the time step size (SDIRK2/SDIRK2-L with $\Delta t / 2$ in
\cref{fig:ex3_sdirk}) is already sufficient for obtaining similar convergence rates for all
considered time-stepping methods.

\begin{figure}[t]
    \includegraphics[width=0.49\textwidth]{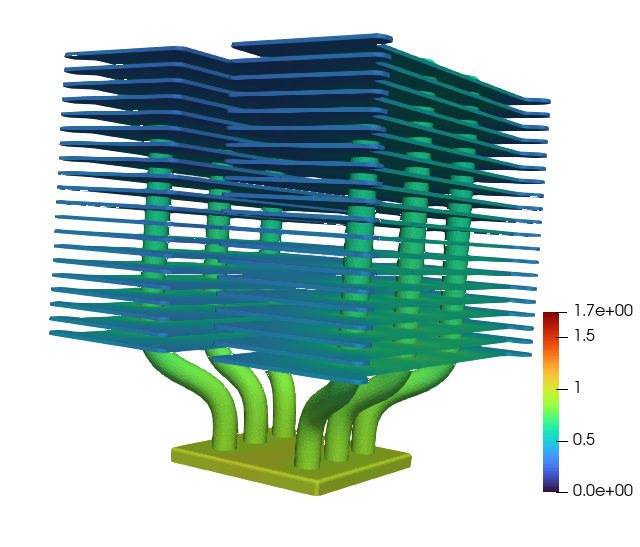}
    \includegraphics[width=0.49\textwidth]{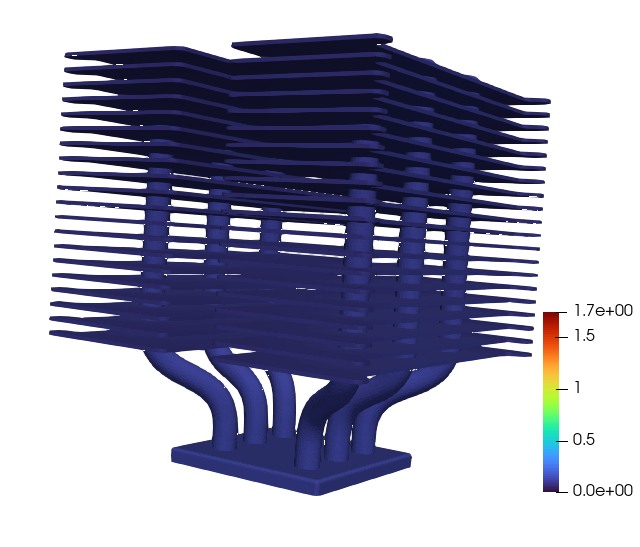}\\
    \includegraphics[width=0.49\textwidth]{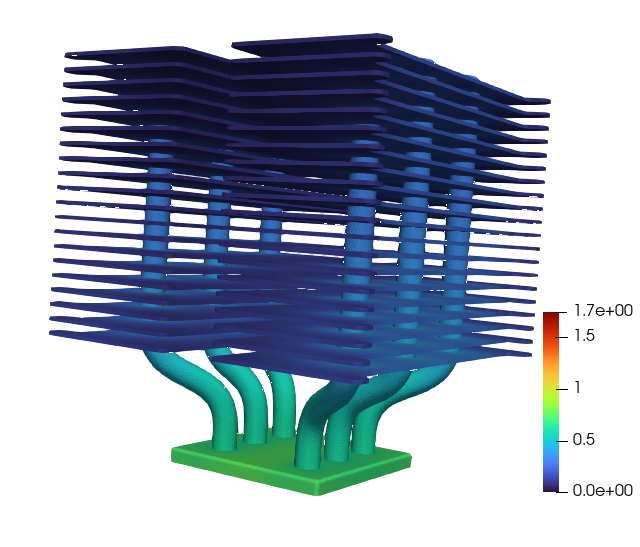}
    \includegraphics[width=0.49\textwidth]{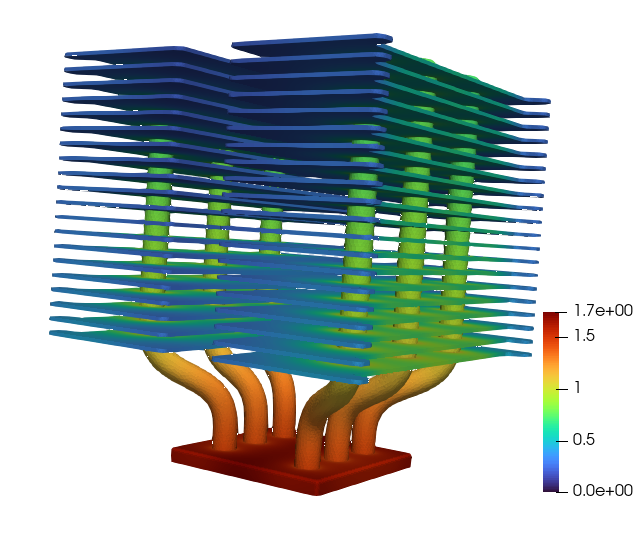}
    \caption{%
        Experiment 4: Solution of \cref{eq:ex4} at times $t=0.25$ (top left), $t=0.5$ (top right),
        $t = 0.75$ (bottom left) and $t= 1$ (bottom right).
    }
    \label{fig:ex4_solution}
\end{figure}

\subsection{Experiment 4} \label{sec:ex4}
Finally, we consider a larger three-dimensional test problem on the heat-sink geometry shown in
\cref{fig:ex4_solution}.
The geometry is scaled to fit into a cube with side length $1$.
We partition the domain $\Omega$ into subdomains corresponding to the fins
($\Omega_{\text{fin}}$), heat pipes ($\Omega_{\text{pipe}}$) and the baseplate ($\Omega_{\text{base}}$)
of the heat sink.
Further, we let $\Gamma_{\text{fin}}$ denote the boundary of the fins and $\Gamma_{\text{bot}}$ the
bottom boundary of the baseplate.
We solve the following heat equation with mixed boundary conditions:
\begin{equation}
    \label{eq:ex4}
    \begin{aligned}
        u_t(\mathbf{x},t) - \nabla \cdot \left[d(\mathbf{x}) \nabla_{\mathbf{x}} u(\mathbf{x})\right]
        &=  0 
        & \mathbf{x} &\in \Omega,\ t \in [0,1],\\
        -d(\mathbf{x}) \nabla_{\mathbf{x}} u(\mathbf{x},t) \cdot \mathbf{n}(\mathbf{x})
        &= (\frac{1}{2}+t) \cdot u(\mathbf{x},t)
        & \mathbf{x} &\in \Gamma_{\text{fin}},\\
        -d(\mathbf{x}) \nabla_{\mathbf{x}} u(\mathbf{x},t) \cdot \mathbf{n}(\mathbf{x})
        &= -g(t)
        & \mathbf{x} &\in \Gamma_{\text{bot}},\\
        -d(\mathbf{x}) \nabla_{\mathbf{x}} u(\mathbf{x},t) \cdot \mathbf{n}(\mathbf{x})
        &= 0
        & \mathbf{x} &\in \Gamma \setminus (\Gamma_{\text{fin}} \cup \Gamma_{\text{bot}}),\\
        u(\mathbf{x},0) &= 0.
    \end{aligned}
\end{equation}
Here, $d(\mathbf{x})$ is given by
\[
    d(\mathbf{x}) =
    \begin{cases}
        10 & \mathbf{x} \in \Omega_{\text{fin}}, \\
        100 & \mathbf{x} \in \Omega_{\text{base}}, \\
        1000 & \mathbf{x} \in \Omega_{\text{pipe}},
    \end{cases}
\]
accounting for different heat conductivities of the materials that
make up the heat-sink components.  We assume that the heat inflow at
the baseplate $g(t)$ is given by different load patterns of the form
\[
    g(t) =
    \begin{cases}
         50\cdot\frac{t}{0.3} & t \leq 0.3, \\
         50\cdot\left(1 + \operatorname{sign}\left(
            \sin\left(\frac{t-0.3}{0.3}\cdot 8\cdot \pi\right)\right)\right) & 0.3 < t \leq 0.6, \\
         50\cdot\left(1 + \cos\left(\frac{t-0.6}{0.4}\cdot 20\cdot \pi\right)\right) & 0.6 < t.
    \end{cases}
\]
The time-dependent Robin parameter $(1/2 + t)$ roughly models the
cooling of the heat sink by a fan that speeds up over time.

For the discretization in space, we use a tetrahedral mesh with 1,405,317
elements, yielding a finite-element space with 444,693 degrees of
freedom.  For the time discretization, we use 500 equidistant
backward Euler steps.  The solution at times $t = 0.25,\,0.5,\,0.75,\,1.0$ is
visualized in \cref{fig:ex4_solution}.

\begin{figure}
    \begin{center}
        \includegraphics{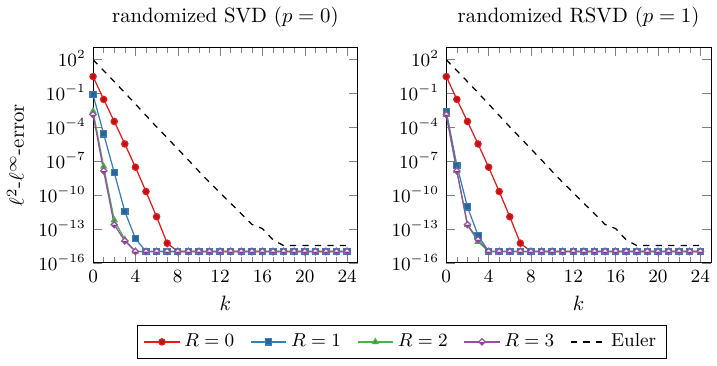} 
    \end{center}
    \caption{%
      Experiment 4: Maximum $\ell^2$-errors over time vs.\ number of
      Parareal iterations for different ranks $R$ of $G_n$.  The
      dashed lines indicate the Parareal errors when the $G_n$ are
      given by a single backward Euler step.  }
    \label{fig:ex4_max_errs}
\end{figure}

For the Parareal algorithm, we partition the time domain into 25 intervals of equal size
with 20 time steps each.  The
singular values of the fine solvers $F_n$ decay quickly, with the third singular value being
almost three orders of magnitude smaller than the first
(\cref{fig:ex3_ex4_svals}, right).  The singular values show a very
similar decay throughout all time intervals,
up to a factor that decreases as the time index $n$ and, hence,
the Robin parameter $(1/2 + t)$, increase.

We consider low-rank coarse solvers $G_n$ that are computed using randomized SVDs
with no ($p=0$) or one ($p = 1$) oversampling vector and compare their
performance with single Euler-step coarse solvers in
\cref{fig:ex4_max_errs}.  We observe that the Euler coarse solvers
perform very badly for this example, reaching machine precision only
at $k=18$, whereas using constant $G_n$ ($R=0$), machine precision is
reached already after half as many iterations.  Using low-rank coarse
solvers, we can further halve the number of required iterations when
we set the truncation rank $R$ to $2$ or higher.  When we are willing
to accept a slightly larger error of around $10^{-13}$, $k=2$ iterations
suffice, whereas for $k=1$, we can still reach an error of $10^{-7}$.
The use of oversampling in the SVD computation only has a relevant
impact for $R=1$, where the error can improve several orders of
magnitude.  For $R>1$, there is no significant difference.

For this experiment, we also report algorithm runtimes along with the
number of required (adjoint) fine solver evaluations in \cref{tab:runtimes}%
We see that the required time for the randomized SVDs does not
significantly increase for larger $R$.  This is due to the fact that
we have chosen a direct linear solver (\texttt{SuperLU}) for the time
stepping: in each time interval, we need to evaluate $F_n$ for $R+p+1$
initial values.  As all initial values are known at the same time, we
can perform the time stepping simultaneously by solving the same
linear system for $R+p+1$ right-hand sides in each time step.  Since
for direct solvers, the bulk of the computational time lies within
the LU decomposition of the system matrix, solving for additional
right-hand sides comes at only a very small additional cost.  After
having evaluated $F_n$, we can proceed in the same way with the $R+p$
evaluations of $F_n'^*$.

While using a direct linear solver might not be realistic in many
typical Parareal applications, we remark that also with iterative
solvers significant optimizations are possible, for instance by using
block Krylov methods or vectorized software implementations.
Additionally, all individual initial and terminal value problems are
completely independent, so their solution can be trivially
parallelized.

\begin{table}
    \caption{Experiment 4. Total runtimes in seconds (total number of
      fine solver and adjoint fine solver evaluations per time
      interval) for different coarse solvers. ``$G_n\!$'' refers to the
      computation of the rank-$R$ coarse solvers using randomized SVDs
      with oversampling parameter $p$.
      The required number of fine solver evaluations per time interval
      is given by $K_{F,n} + 1$, where $K_{F,n}$ is given by \cref{eq:num_F_eval}.
      One additional fine solver evaluation is needed
      to obtain an approximation at intermediate time points.
      The total time for a sequential
      solution is
    $3{,}290$
    seconds.
    }
    \label{tab:runtimes}
    \setlength{\tabcolsep}{0.1em}
    \centering
    \pgfplotstabletypeset[
        every head row/.style={
            before row={
                \toprule
                &
                & 
                \multicolumn{2}{c}{$G_n$} &
                \multicolumn{2}{c}{$u^0$} &
                \multicolumn{2}{c}{$u^1$} &
                \multicolumn{2}{c}{$u^2$} &
                \multicolumn{2}{c}{$u^3$} &
                \multicolumn{2}{c}{$u^4$}\\
            },
            after row={
                \midrule
            },
            output empty row,
        },
        every last row/.style={
            after row=\bottomrule
        },
        columns/p/.style={
            string type,
        },
        columns/r/.style={
            column type={@{\extracolsep{0.3em}}r},
            string type,
        },
        columns/setup/.style={
            column type={@{\extracolsep{1.2em}}r},
        },
        columns/setupfine/.style={
            column type={@{\extracolsep{0.3em}}c},
            string type,
        },
        columns/0/.style={
            column type={@{\extracolsep{1.2em}}r},
            int detect,
        },
        columns/0fine/.style={
            column type={@{\extracolsep{0.3em}}c},
            string type,
        },
        columns/1/.style={
            column type={@{\extracolsep{1.2em}}r},
            int detect,
        },
        columns/1fine/.style={
            column type={@{\extracolsep{0.3em}}c},
            string type,
        },
        columns/2/.style={
            column type={@{\extracolsep{1.2em}}r},
        },
        columns/2fine/.style={
            column type={@{\extracolsep{0.3em}}c},
            string type,
        },
        columns/3/.style={
            column type={@{\extracolsep{1.2em}}r},
        },
        columns/3fine/.style={
            column type={@{\extracolsep{0.3em}}c},
            string type,
        },
        columns/4/.style={
            column type={@{\extracolsep{1.2em}}r},
        },
        columns/4fine/.style={
            column type={@{\extracolsep{0.3em}}c},
            string type,
        },
        every row no 4/.style={
            before row=\hline
        }
    ]{ex4_s25_times_and_solves.dat}
\end{table}

Finally, the number of required fine solver evaluations when using
low-rank coarse solvers is actually smaller than for the Euler coarse
solver due to the much higher Parareal convergence speed.  For
instance, to reach an error below $10^{-7}$, no more than 7 
evaluations of $F_n$ and $F_n^*$ per time interval are required, whereas with Euler coarse
solvers, 10 evaluations of $F_n$ are needed.

We plot the overall speedup factor achieved by the different Parareal
variants in comparison to a completely sequential solution of
\cref{eq:ex4} in \cref{fig:ex4_times}.  To achieve an error of
$10^{-7}$, an optimal speedup of 5 is achieved with $R \geq 1$ whereas
with Euler coarse solvers, only a speedup of around $1.8$ is possible.
For an error of $10^{-13}$, there is almost no speedup with the Euler
coarse solvers, whereas with low-rank coarse solvers, we can still
reach a speedup of around 4 for $R \geq 2$.

\def\showeulermark{\tikz \node[semithick]{\pgfuseplotmark{x}}; }
\begin{figure}
    \begin{center}
        \includegraphics{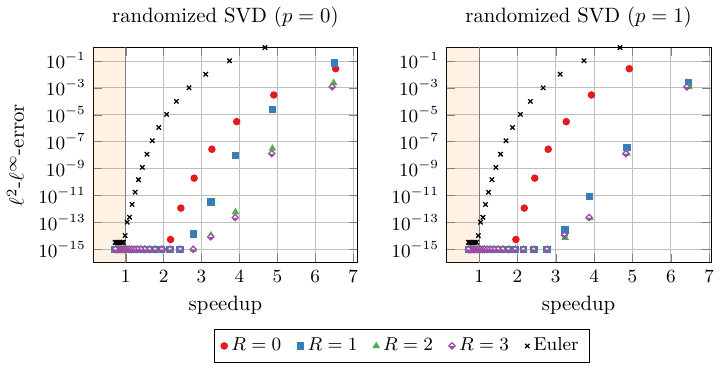} 
    \end{center}
    \caption{%
      Experiment 4: Maximum $\ell^2$-errors over time vs.\ speedup
      w.r.t.\ a sequential solution for different ranks $R$ of $G_n$
      and different numbers of Parareal iterations.  The marker
      \protect\showeulermark indicates the Parareal speedup when the
      $G_n$ are given by a single backward Euler step.  }
    \label{fig:ex4_times}
\end{figure}

\subsection{Error estimator efficiency}

For experiments 3 and 4, we evaluate the efficiencies $\eta^k$
of the a posteriori error estimator \cref{eq:apost_bound} given by
\begin{equation}
    \label{eq:def_efficiency}
    \eta^k := \frac{\max_{t \in [0,T]}\|u^k(t) - u(t)\|}{\max_{1 \leq n < N}\varepsilon\sum_{m=1}^{n-1} \delta^{n-m-1} \|u^k_m -
    u^{k-1}_m\|},
\end{equation}
where, as in the previous experiments, we consider the $L^{\infty}$-in-time
error by extending $u_{n-1}^k$ into $[t_{n-1}, t_n)$ via $F_{n}$.
The constants $\delta$ and $\varepsilon$ are obtained from the computed randomized SVDs,
where we use the additional oversampling vector ($p=1$) to approximate $\sigma_{n,R+1}$.
We plot $\eta^k$ for different values of $R$ in
\cref{fig:ex3_ex4_eff}.  We observe that in some cases, $\eta^k$ can
become larger than $1$, which means that the error is underestimated.
However, this only happens when the actual error is below $10^{-13}$
(dashed segments), so we attribute these cases to limited floating
point accuracy.  For both experiments, the efficiency of the error
estimator is quite good with $\eta^k > 0.1$ in most cases.  In
general, we cannot expect to be able to bound $\eta^k$ from below:
except for the special case in \cref{thm:diagonal_convergence}, the
actual convergence rate might be better than what is predicted by the
SVD truncation error, since dominant left-singular vectors of $F_n'$
might be projected onto less significant right-singular vectors of
$F_{n+1}'$.  Thus, it is not surprising that we observe cases where
$\eta_k \ll 0.1$.  However, since $\|u_n^{k+1}-u_n^k\| \leq
\|e_n^{k+1}\| + \|e_n^{k}\| \leq 2\|e_n^{k}\|$ when $\|e_n^{k+1}\| <
\|e_n^k\|$, we expect no more than one unnecessary Parareal
iteration in general.

\begin{figure}
    \begin{center}
        \includegraphics{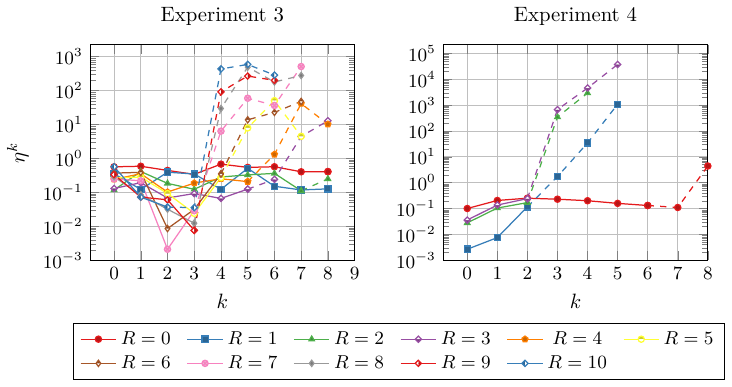}
    \end{center}
    \caption{
        Efficiencies $\eta^k$ \cref{eq:def_efficiency} for the a posteriori error bound
        \cref{eq:apost_bound} vs.\ number of Parareal iterations for different ranks $R$ of $G_n$, $p=1$.
        The plots are drawn dashed when the maximum error falls below $10^{-13}$.
    }
    \label{fig:ex3_ex4_eff}
\end{figure}

\section{Conclusions}
We have shown that using low-rank coarse solvers can dramatically accelerate the
convergence of Parareal for problems of parabolic type.
Thus, low-rank coarse solvers can increase the overall parallelism of Parareal by
trading global iterations for embarrassingly parallel fine solver evaluations in 
the algorithm's setup phase.
After the setup phase, low-rank coarse solvers are very fast to evaluate, which reduces
the runtime of the sequential coarse corrections and, hence, increases parallelism even further.
At the same time, as we have seen in experiment~4, low-rank coarse solvers can even lead to a reduction
of the overall computational work.

In this work we have focused on the solution of a single linear PDE.
While this approach cannot be expected to perform well for purely hyperbolic problems \cite{GanderPararealforHyperbolic},
an extension of the approach to parametric or non-linear problems
by applying further techniques from the reduced basis methodology seems promising.
For large-scale problems, the approximation of space- and time-localized transfer operators could
accelerate the setup phase of the algorithm by replacing expensive globally-coupled fine
solver evaluations with small independent time-evolution problems that could be solved on a single
compute node.

\bibliographystyle{plainurl}
\bibliography{paper}

\begin{thebibliography}{10}

\bibitem{MR4732743}
Katia Ait-Ameur and Yvon Maday.
\newblock Multi-step variant of the parareal algorithm: convergence analysis and numerics.
\newblock {\em ESAIM Math. Model. Numer. Anal.}, 58(2):673--694, 2024.
\newblock \href {https://doi.org/10.1051/m2an/2024014} {\path{doi:10.1051/m2an/2024014}}.

\bibitem{MR3701994}
Peter Benner, Mario Ohlberger, Anthony Patera, Gianluigi Rozza, and Karsten Urban, editors.
\newblock {\em Model reduction of parametrized systems}, volume~17 of {\em MS\&A. Modeling, Simulation and Applications}.
\newblock Springer, Cham, 2017.
\newblock Selected papers from the 3rd MoRePaS Conference held at the International School for Advanced Studies (SISSA), Trieste, October 13--16, 2015.
\newblock \href {https://doi.org/10.1007/978-3-319-58786-8} {\path{doi:10.1007/978-3-319-58786-8}}.

\bibitem{Buhr2020245}
Andreas Buhr, Laura Iapichino, Mario Ohlberger, Stephan Rave, Felix Schindler, and Kathrin Smetana.
\newblock Localized model reduction for parameterized problems.
\newblock In Peter Benner, Stefano {Grivet-Talocia}, Alfio Quarteroni, Gianluigi Rozza, Wilhelmus Schilders, and Lu{\'i}s~Miguel Silveira, editors, {\em Model {{Order Reduction}} ({{Volume}} 2)}. De Gruyter, Berlin, Boston, 2021.
\newblock \href {https://doi.org/10.1515/9783110671490-006} {\path{doi:10.1515/9783110671490-006}}.

\bibitem{MR3824169}
Andreas Buhr and Kathrin Smetana.
\newblock Randomized local model order reduction.
\newblock {\em SIAM J. Sci. Comput.}, 40(4):A2120--A2151, 2018.
\newblock \href {https://doi.org/10.1137/17M1138480} {\path{doi:10.1137/17M1138480}}.

\bibitem{carrel2023low}
Benjamin Carrel, Martin~J. Gander, and Bart Vandereycken.
\newblock Low-rank parareal: a low-rank parallel-in-time integrator.
\newblock {\em BIT Numerical Mathematics}, 63(1):13, 2023.
\newblock \href {https://doi.org/10.1007/s10543-023-00953-3} {\path{doi:10.1007/s10543-023-00953-3}}.

\bibitem{chen2014use}
Feng Chen, Jan~S. Hesthaven, and Xueyu Zhu.
\newblock On the use of reduced basis methods to accelerate and stabilize the parareal method.
\newblock {\em Reduced Order Methods for modeling and computational reduction}, pages 187--214, 2014.
\newblock \href {https://doi.org/10.1007/978-3-319-02090-7_7} {\path{doi:10.1007/978-3-319-02090-7_7}}.

\bibitem{DalcinFang2021Mpi4pyStatusUpdate}
Lisandro Dalcin and Yao-Lung~L. Fang.
\newblock Mpi4py: {{Status Update After}} 12 {{Years}} of {{Development}}.
\newblock {\em Computing in Science \& Engineering}, 23(4):47--54, 2021.
\newblock \href {https://doi.org/10.1109/MCSE.2021.3083216} {\path{doi:10.1109/MCSE.2021.3083216}}.

\bibitem{DobrevKolevEtAl2017TwoLevelConvergenceTheory}
V.~A. Dobrev, {\relax Tz}.~Kolev, N.~A. Petersson, and J.~B. Schroder.
\newblock Two-{{Level Convergence Theory}} for {{Multigrid Reduction}} in {{Time}} ({{MGRIT}}).
\newblock {\em SIAM Journal on Scientific Computing}, 39(5):S501--S527, 2017.
\newblock \href {https://doi.org/10.1137/16M1074096} {\path{doi:10.1137/16M1074096}}.

\bibitem{FarhatCortialEtAl2006TimeparallelImplicitIntegrators}
Charbel Farhat, Julien Cortial, Clim{\`e}ne Dastillung, and Henri Bavestrello.
\newblock Time-parallel implicit integrators for the near-real-time prediction of linear structural dynamic responses.
\newblock {\em International Journal for Numerical Methods in Engineering}, 67(5):697--724, 2006.
\newblock \href {https://doi.org/10.1002/nme.1653} {\path{doi:10.1002/nme.1653}}.

\bibitem{GanderPetcu2008AnalysisKrylovSubspace}
M.~Gander and M.~Petcu.
\newblock Analysis of a {{Krylov}} subspace enhanced parareal algorithm for linear problems.
\newblock {\em ESAIM: Proceedings}, 25:114--129, 2008.
\newblock \href {https://doi.org/10.1051/proc:082508} {\path{doi:10.1051/proc:082508}}.

\bibitem{GanderPararealforHyperbolic}
Martin~J. Gander.
\newblock Parareal for {{Hyperbolic Problems Just Does}} not {{Work}} -- {{Or Does}} it?
\newblock In {\em Proceedings of the 29th International Conference on Domain Decomposition Methods}, 2026.
\newblock To appear.
\newblock URL: \url{https://www.ddm.org/DD29/proceedings/ID35_pages.pdf}.

\bibitem{gander2008nonlinear}
Martin~J. Gander and Ernst Hairer.
\newblock Nonlinear convergence analysis for the parareal algorithm.
\newblock In {\em Domain decomposition methods in science and engineering XVII}, pages 45--56. Springer, 2008.
\newblock \href {https://doi.org/10.1007/978-3-540-75199-1_4} {\path{doi:10.1007/978-3-540-75199-1_4}}.

\bibitem{gander2014analysis}
Martin~J. Gander and Ernst Hairer.
\newblock Analysis for parareal algorithms applied to {H}amiltonian differential equations.
\newblock {\em Journal of Computational and Applied Mathematics}, 259:2--13, 2014.
\newblock \href {https://doi.org/10.1016/j.cam.2013.01.011} {\path{doi:10.1016/j.cam.2013.01.011}}.

\bibitem{GanderLunet2024}
Martin~J. Gander and Thibaut Lunet.
\newblock {\em Time Parallel Time Integration}.
\newblock SIAM, Philadelphia, PA, 2024.
\newblock \href {https://doi.org/10.1137/1.9781611978025} {\path{doi:10.1137/1.9781611978025}}.

\bibitem{MR4643841}
Martin~J. Gander, Thibaut Lunet, Daniel Ruprecht, and Robert Speck.
\newblock A unified analysis framework for iterative parallel-in-time algorithms.
\newblock {\em SIAM J. Sci. Comput.}, 45(5):A2275--A2303, 2023.
\newblock \href {https://doi.org/10.1137/22M1487163} {\path{doi:10.1137/22M1487163}}.

\bibitem{GanderOhlbergerEtAl2024PararealAlgorithmCoarse}
Martin~J. Gander, Mario Ohlberger, and Stephan Rave.
\newblock A {{Parareal}} algorithm without {{Coarse Propagator}}?, 2024.
\newblock \href {https://arxiv.org/abs/2409.02673} {\path{arXiv:2409.02673}}.

\bibitem{gander2007analysis}
Martin~J. Gander and Stefan Vandewalle.
\newblock Analysis of the parareal time-parallel time-integration method.
\newblock {\em SIAM Journal on Scientific Computing}, 29(2):556--578, 2007.
\newblock \href {https://doi.org/10.1137/05064607X} {\path{doi:10.1137/05064607X}}.

\bibitem{MR4234221}
Laura Grigori, Sever~A. Hirstoaga, Van-Thanh Nguyen, and Julien Salomon.
\newblock Reduced model-based parareal simulations of oscillatory singularly perturbed ordinary differential equations.
\newblock {\em J. Comput. Phys.}, 436:Paper No. 110282, 18, 2021.
\newblock \href {https://doi.org/10.1016/j.jcp.2021.110282} {\path{doi:10.1016/j.jcp.2021.110282}}.

\bibitem{GustafssonMcBain2020ScikitfemPythonPackage}
Tom Gustafsson and G.~D. McBain.
\newblock Scikit-fem: {{A Python}} package for finite element assembly.
\newblock {\em Journal of Open Source Software}, 5(52):2369, 2020.
\newblock \href {https://doi.org/10.21105/joss.02369} {\path{doi:10.21105/joss.02369}}.

\bibitem{MR2806637}
N.~Halko, P.~G. Martinsson, and J.~A. Tropp.
\newblock Finding structure with randomness: probabilistic algorithms for constructing approximate matrix decompositions.
\newblock {\em SIAM Rev.}, 53(2):217--288, 2011.
\newblock \href {https://doi.org/10.1137/090771806} {\path{doi:10.1137/090771806}}.

\bibitem{he2010reduced}
Liping He.
\newblock The reduced basis technique as a coarse solver for parareal in time simulations.
\newblock {\em Journal of Computational Mathematics}, pages 676--692, 2010.
\newblock URL: \url{https://www.jstor.org/stable/43693608}.

\bibitem{MR3408061}
Jan~S. Hesthaven, Gianluigi Rozza, and Benjamin Stamm.
\newblock {\em Certified reduced basis methods for parametrized partial differential equations}.
\newblock SpringerBriefs in Mathematics. Springer, Cham; BCAM Basque Center for Applied Mathematics, Bilbao, 2016.
\newblock BCAM SpringerBriefs.
\newblock \href {https://doi.org/10.1007/978-3-319-22470-1} {\path{doi:10.1007/978-3-319-22470-1}}.

\bibitem{MR4876550}
Bangti Jin, Qingle Lin, and Zhi Zhou.
\newblock Optimizing coarse propagators in parareal algorithms.
\newblock {\em SIAM J. Sci. Comput.}, 47(2):A735--A761, 2025.
\newblock \href {https://doi.org/10.1137/23M1619733} {\path{doi:10.1137/23M1619733}}.

\bibitem{LehoucqSorensenEtAl1998ARPACKUsersGuide}
R.~B. Lehoucq, D.~C. Sorensen, and C.~Yang.
\newblock {\em {{ARPACK Users}}' {{Guide}}}.
\newblock Software, {{Environments}}, and {{Tools}}. {Society for Industrial and Applied Mathematics}, 1998.
\newblock \href {https://doi.org/10.1137/1.9780898719628} {\path{doi:10.1137/1.9780898719628}}.

\bibitem{lions2001resolution}
Jacques-Louis Lions, Yvon Maday, and Gabriel Turinici.
\newblock R{\'e}solution d'edp par un sch{\'e}ma en temps parar{\'e}el.
\newblock {\em Comptes Rendus de l'Acad{\'e}mie des Sciences-Series I-Mathematics}, 332(7):661--668, 2001.
\newblock \href {https://doi.org/10.1016/S0764-4442(00)01793-6} {\path{doi:10.1016/S0764-4442(00)01793-6}}.

\bibitem{MartinssonTropp2020RandomizedNumericalLinear}
Per-Gunnar Martinsson and Joel~A. Tropp.
\newblock Randomized numerical linear algebra: {{Foundations}} and algorithms.
\newblock {\em Acta Numerica}, 29:403--572, 2020.
\newblock \href {https://doi.org/10.1017/S0962492920000021} {\path{doi:10.1017/S0962492920000021}}.

\bibitem{MilkRaveEtAl2016PyMORGenericAlgorithms}
R.~Milk, S.~Rave, and F.~Schindler.
\newblock {{pyMOR}} -- {{Generic Algorithms}} and {{Interfaces}} for {{Model Order Reduction}}.
\newblock {\em SIAM Journal of Scientific Computing}, 38(5):S194--S216, 2016.
\newblock \href {https://doi.org/10.1137/15M1026614} {\path{doi:10.1137/15M1026614}}.

\bibitem{MR3379913}
Alfio Quarteroni, Andrea Manzoni, and Federico Negri.
\newblock {\em Reduced basis methods for partial differential equations}, volume~92 of {\em Unitext}.
\newblock Springer, Cham, 2016.
\newblock \href {https://doi.org/10.1007/978-3-319-15431-2} {\path{doi:10.1007/978-3-319-15431-2}}.

\bibitem{ZenodoRecord}
Stephan Rave.
\newblock A parareal algorithm with low-rank coarse solvers (supplement).
\newblock Zenodo, 2026.
\newblock \href {https://doi.org/10.5281/zenodo.20442179} {\path{doi:10.5281/zenodo.20442179}}.

\bibitem{MR4401794}
Julia Schleu{\ss} and Kathrin Smetana.
\newblock Optimal local approximation spaces for parabolic problems.
\newblock {\em Multiscale Model. Simul.}, 20(1):551--582, 2022.
\newblock \href {https://doi.org/10.1137/20M1384294} {\path{doi:10.1137/20M1384294}}.

\bibitem{MR4589122}
Julia Schleu\ss, Kathrin Smetana, and Lukas Ter~Maat.
\newblock Randomized quasi-optimal local approximation spaces in time.
\newblock {\em SIAM J. Sci. Comput.}, 45(3):A1066--A1096, 2023.
\newblock \href {https://doi.org/10.1137/22M1481002} {\path{doi:10.1137/22M1481002}}.

\bibitem{2020SciPy-NMeth}
Pauli Virtanen, Ralf Gommers, Travis~E. Oliphant, et~al.
\newblock {{SciPy} 1.0: Fundamental Algorithms for Scientific Computing in Python}.
\newblock {\em Nature Methods}, 17:261--272, 2020.
\newblock \href {https://doi.org/10.1038/s41592-019-0686-2} {\path{doi:10.1038/s41592-019-0686-2}}.

\bibitem{MR3803287}
Shu-Lin Wu.
\newblock Toward parallel coarse grid correction for the parareal algorithm.
\newblock {\em SIAM J. Sci. Comput.}, 40(3):A1446--A1472, 2018.
\newblock \href {https://doi.org/10.1137/17M1141102} {\path{doi:10.1137/17M1141102}}.

\end{thebibliography}

\end{document}